\def\subsubsection{\@startsection{subsubsection}{3}%
  \z@{.5\linespacing\@plus.7\linespacing}{-.5em}%
  {\normalfont\bfseries}}
\theoremstyle{plain} 
\newtheorem{thm}{Theorem}[section]
\newtheorem*{mthm*}{Main Theorem}
\newtheorem{lem}[thm]{Lemma} 
\newtheorem{prop}[thm]{Proposition}
\newtheorem{propA}{Proposition}[]
\theoremstyle{definition} 
\newtheorem{defn}{Definition}[section]
\theoremstyle{remark} 
\newtheorem{oss}[thm]{Remark}
\newcommand{\xhooklongrightarrow}[2][]{% 
  \ext@arrow3399{\hooklongrightarrowfill@}{#1}{#2}} 
 \DeclareMathOperator{\Ass}{Ass}
  \DeclareMathOperator{\car}{char}
   \DeclareMathOperator{\lt}{in_\prec}
 \DeclareMathOperator{\Min}{Min}
\title{Binomial edge ideals of weakly closed graphs\\}
\author{Lisa Seccia}
\email{seccia@dima.unige.it}
\address{Universit\`a  di Genova,  Dipartimento di Matematica. 
 Via Dodecaneso 35, 16146 Genova, Italy}
 \thanks{The author is supported by PRIN  2020355BBY ``Squarefree Gr\"obner degenerations, special varieties and related topics".}
 \date{}
\begin{document}
 \maketitle
 
 \begin{abstract}
 Closed graphs have been characterized by Herzog et al. as the graphs whose binomial edge ideals have a quadratic Gr\"obner basis with respect to a diagonal term order.  In this paper, we focus on a generalization of closed graphs, namely weakly-closed graphs (or co-comparability graphs). 
Build on some results about Knutson ideals of generic matrices, we characterize weakly closed graphs as the only graphs whose binomial edge ideals are Knutson ideals for a certain polynomial $f$. In doing so, we re-prove Matsuda's theorem about the F-purity of binomial edge ideals of weakly closed graphs in positive characteristic and we extend it to generalized binomial edge ideals. Furthermore, we give a characterization of weakly closed graphs in terms of the minimal primes of their binomial edge ideals and we characterize all minimal primes of Knutson ideals for this choice of $f$.

%We show that a graph is weakly closed if and only if its binomial edge ideal is a Knutson ideal. In particular, binomial edge ideals of weakly closed graphs are $F$-pure in positive characteristic. Furthermore, we give a characterization of weakly closed graphs in terms of their minimal primes and we characterize all the minimal primes in $C_f$.
 \end{abstract}
 
\section{Introduction}
Binomial edge ideals were introduced by Herzog, Hibi, Hreinsdóttir, Kahle and Rauh \cite{HHHKR} and, independently, by Ohtani \cite{Oh} as a generalization of determinantal ideals.  The main purpose was to understand the interplay between the combinatorial invariants of a graph and the algebraic invariants of its associated binomial edge ideal. It is worth mentioning that before binomial edge ideals were defined, other ways to encode combinatorial objects into ideals had been introduced. In this context, one of the greatest contributions came from Stanley and Reisner who established a bijection between simplicial complexes and squarefree monomial ideals. Also, Villarreal \cite{Vi} introduced the notion of edge ideals of graphs, which are ideals generated by monomials $x_ix_j$ corresponding to the edges of $G$.\\

 Let $R=\mathbb{K}[x_1, \ldots,x_n, y_1, \ldots,y_n]$ and let $G$ be a graph on $n$ vertices with edges $E(G)$. We write $J_G$ to denote the \emph{binomial edge ideal} of $G$, that is
\begin{equation*}
J_G= \left( f_{ij}:=x_i y_j-x_j y_i \mid \{i,j\} \in E(G)\right).
\end{equation*}

In other words, $J_G$ is the ideal generated by the $2$-minors of the generic matrix
$$ X_{n}=
\begin{bmatrix}
    x_1       & x_2 & x_3 & \dots & x_n \\
     y_1       & y_2 & y_3 & \dots & y_n
\end{bmatrix}
$$

whose column indices are given by the edges of $G$. \par 
If we take $G=K_n$ to be the complete graph on $n$ vertices, then it is clear from the definition that its binomial edge ideal $J_{K_n}$ is the ideal of $2$-minors of $X_n$. This is why binomial edge ideals can be considered a generalization of determinantal ideals.\par
%Binomial edge ideals were introduced by Herzog, Hibi, Hreindóttir, Kahle and Rauh \cite{HHHKR} and, independently, by Ohtani \cite{Oh}. 
%The main purpose was to understand the interplay between the combinatorial invariants associated with a graph and the algebraic invariants associated with its binomial edge ideal.\par 
%to establish a correspondence between the combinatorial properties of the graph $G$ and the algebraic properties of the attached binomial edge ideal. \\
%It is worth mentioning that before binomial edge ideals were defined, there had been other successfull attempts to encode combinatorial objects into ideals. In this context, one of the greatest contributions comes from Stanley and Reisner who enstablished a bijection between simplicial complexes and squarefree monomial ideals. Also, Villareal introduced the notion of edge ideals of a graph, which are ideals generated by monomials $x_ix_j$ corresponding to the edges of $G$.\\

%the so-called \emph{Stanley-Reisner} correspondence which provides a bridge between combinatorics (simplicial complexes) and commutative algebra (squarefree monomial ideals). \par 
In \cite{HHHKR} the authors study the algebraic properties of binomial edge ideals in terms of combinatorial invariants of the underlying graph. Among other things, they  prove that binomial edge ideals are radical, and they give a combinatorial description of their minimal primes (see \cite[Theorem 3.2]{HHHKR}). Furthermore, they find out that the only graphs whose binomial edge ideals have a quadratic Gr\"obner basis (with respect to a diagonal term order) are those such that if $\{i,k\} \in E(G)$ then $\{i,j\} \in E(G)$ and $\{j,k\} \in E(G)$ for all integers $1 \leq i <j<k\leq n$. They called them \emph{closed graphs}.
\begin{thm}\cite[Theorem 1.1]{HHHKR}\label{thmHHH}
$G$ is a closed graph if and only if the natural generators of $J_G$ form a Gr\"obner basis with respect to a diagonal term order.
\end{thm}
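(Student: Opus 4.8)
The plan is to verify Buchberger's criterion for the natural generators $f_{ij}$ with respect to a diagonal term order $\prec$ on $R$ --- for instance the lexicographic order with $x_1 \succ \cdots \succ x_n \succ y_1 \succ \cdots \succ y_n$, so that $\init_\prec(f_{ij}) = x_i y_j$ whenever $i<j$. Everything is done with respect to a fixed labeling of $V(G)$; recalling that $G$ is \emph{closed} precisely when some labeling satisfies the condition in the statement, the theorem follows once one proves, for each fixed labeling, that this condition holds if and only if the $f_{ij}$ form a Gr\"obner basis with respect to the associated diagonal order.

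First, assume the closedness condition. Consider an S-pair $S(f_{ij},f_{kl})$ with $i<j$ and $k<l$. If the leading monomials $x_iy_j$ and $x_ky_l$ are coprime --- which happens unless $i=k$ or $j=l$ --- then $S(f_{ij},f_{kl})$ reduces to $0$ by the standard coprime-leading-terms argument, so nothing must be checked. If $i=k$, say with $j<l$, a direct computation gives
\[
S(f_{ij},f_{il}) \;=\; y_l f_{ij} - y_j f_{il} \;=\; y_i\bigl(x_l y_j - x_j y_l\bigr) \;=\; -\,y_i f_{jl}.
\]
Closedness forces $\{j,l\}\in E(G)$, hence $f_{jl}$ is one of the generators and the right-hand side reduces to $0$ in a single step. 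The case $j=l$ is symmetric, with $x$-multipliers replacing $y$-multipliers: one finds $S(f_{ij},f_{kj}) = \pm\, x_j f_{ik}$, and closedness yields $\{i,k\}\in E(G)$, so again the S-polynomial reduces to $0$. By Buchberger's criterion the $f_{ij}$ form a Gr\"obner basis.

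Conversely, suppose the closedness condition fails for the chosen labeling. Then there are indices $i<j<l$ with $\{i,j\},\{i,l\}\in E(G)$ but $\{j,l\}\notin E(G)$, or the symmetric configuration $\{i,j\},\{k,j\}\in E(G)$ with $i<k<j$ and $\{i,k\}\notin E(G)$; I treat the first case, the second being analogous. By the identity above, $h := y_l f_{ij} - y_j f_{il}\in J_G$ is the nonzero binomial $-x_j y_i y_l + x_l y_i y_j$, so $\init_\prec(h) = x_j y_i y_l$. I would then check that $x_j y_i y_l$ is divisible by no leading term $\init_\prec(f_{ab}) = x_a y_b$ with $\{a,b\}\in E(G)$ and $a<b$: such a divisor would need $a=j$, the only variable of $x$-type occurring, and $b\in\{i,l\}$, forcing either the excluded edge $\{j,l\}$ or the edge $\{i,j\}$, whose leading term is $x_i y_j \neq x_j y_i$. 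Hence $\init_\prec(J_G)$ properly contains the monomial ideal generated by the leading terms of the natural generators, so these are not a Gr\"obner basis, proving the contrapositive.

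The routine inputs are the two S-polynomial identities and the coprimality reduction; the substance of the proof is the dictionary it exposes --- the only potentially obstructing S-pairs are exactly the ones controlled by the combinatorial closedness condition, and conversely a single violation of that condition manufactures a witness binomial whose leading monomial escapes the initial ideal of the natural generators. The point needing care is the interplay between the labeling and the diagonal order: one must argue per labeling and then invoke the definition of a closed graph (equivalently, the structure of the labelings for which the displayed condition can hold) to recover the statement as formulated.
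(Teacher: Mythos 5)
The paper itself offers no proof of this statement (it is quoted from \cite{HHHKR}), so your argument has to stand on its own. Its algebraic core is correct and is the standard one: the only S-pairs with non-coprime leading terms come from two edges sharing an endpoint, your two S-polynomial identities are right, and in the converse the witness binomials do have leading monomials divisible by no $x_ay_b$ with $\{a,b\}\in E(G)$. The genuine gap is the first step of your converse. The closedness condition displayed in the paper (for a fixed labeling) reads: for all $i<j<k$, if $\{i,k\}\in E(G)$ then $\{i,j\}\in E(G)$ \emph{and} $\{j,k\}\in E(G)$. Its negation is the existence of $i<j<k$ with $\{i,k\}\in E(G)$ and at least one of $\{i,j\},\{j,k\}$ absent; this does \emph{not} yield either of your two configurations, both of which require a second edge through the shared endpoint. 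What your configurations negate is the weaker per-labeling condition of \cite{HHHKR} (the sets $\{w>v:\{v,w\}\in E(G)\}$ and $\{w<v:\{v,w\}\in E(G)\}$ are cliques for every $v$), which is exactly what Buchberger's criterion detects. Indeed the per-labeling biconditional you announce at the outset is false: on vertex set $\{1,2,3\}$ with the single edge $\{1,3\}$, the displayed condition fails for that labeling, yet the lone generator $f_{13}$ is trivially a Gr\"obner basis.

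So what you actually establish, labeling by labeling, is: (interval condition) $\Rightarrow$ (Gr\"obner basis) $\Rightarrow$ (shared-endpoint condition). To recover the theorem as stated you still need the purely combinatorial fact that a graph admitting a labeling with the shared-endpoint property admits a (possibly different) labeling with the interval property --- equivalently, that the two standard definitions of ``closed'' cut out the same class of graphs (both describe unit-interval graphs). That implication is true but not routine, and your argument never supplies it; the closing remark acknowledges that the labeling needs care but does not close the circle. Either prove that combinatorial lemma separately, or run the whole argument taking the shared-endpoint condition as the definition of closedness, as \cite{HHHKR} does, in which case your Buchberger computation is a complete proof.
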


A generalization of this theorem to determinantal facet ideals of simplicial complexes has recentely been found in \cite[Theorem 82 and 87]{BSV}. These results correct a theorem stated in \cite{EHHM} and provide a partial answer to a question by Almousa–Vandebogert \cite{AV}.\\
%Among other things, we get a new algebraic result which corrects a recent theorem stated in \cite{EHHM}, extends Theorem \ref{thmHHH} by Herzog et al, and provide a partial answer to a question by Almousa–Vandebogert \cite{AV}.\\

Later, Matsuda \cite{Ma} extends the algebraic approach  introduced by Herzog et al. in \cite{HHHKR} to a larger class of graphs, that he called \emph{weakly closed graphs}.
\begin{defn} Let $G$ be a simple graph on $[n]$. $G$ is said to be \emph{weakly closed} if there exists a labeling of the vertices such that for all integers $1 \leq i <j<k\leq n$, if $\{i,k\} \in E(G)$ then $\{i,j\} \in E(G)$ or $\{j,k\} \in E(G)$.
\end{defn}

Weakly closed graphs are a generalization of closed graphs. In fact, while the definition of closed graphs requires that both $\{i,j\}$ and  $\{j,k\}$ are edges of $G$, for weakly closed graphs it is enough that one of them is an edge of $G$.

\begin{oss}
It is worth pointing out that closed graphs and weakly-closed graphs were already well-known and widely studied in combinatorics where it is costumary to refer to them as \emph{unit-interval graphs} and \emph{co-comparability graphs} (i.e. graphs whose complement is the comparability graph of a poset \cite[Theorem 1.9]{Ma}). So, it would be more accurate to say that these graphs were re-discovered by Herzog et al. and Matsuda from an algebraic perspective. 
\end{oss}

With the above in mind, it is easy to see that complete multipartite graphs and interval graphs are weakly closed and that weakly closed graphs are perfect (see \cite{Ma} for more details).\\

In \cite{Ma} Matsuda began the study of binomial edge ideals of weakly-closed graphs. In particular, assuming that $\mathbb{K}$ has positive characteristic, he generalized Othani's theorem about $F$-purity of binomial edge ideals associated with complete multipartite graphs (see \cite[Theorem 3.1]{Oh}).

\begin{thm}\cite[Theorem 2.3]{Ma}\label{Mafp}
Let $G$ be a weakly closed graph and let $J_G$ be the binomial edge ideal associated with $G$. Then $R/J_G$ is $F$-pure.
\end{thm}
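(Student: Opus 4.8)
The plan is to deduce the statement from the theory of Knutson ideals. Assume $\mathbb{K}$ has characteristic $p>0$. Recall that if $f\in R$ admits a term order $\prec$ for which $\init_\prec(f)$ is a squarefree monomial, then Knutson's family $\mathcal{C}_f$ — the smallest collection of ideals of $R$ containing $(f)$ and closed under sums, finite intersections, and passage to minimal primes — consists of radical ideals that are compatibly Frobenius split with respect to a splitting of $R$ determined by $f$; in particular $R/I$ is $F$-split, hence $F$-pure, for every $I\in\mathcal{C}_f$. The theorem thus reduces to producing such an $f$ with $J_G\in\mathcal{C}_f$ whenever $G$ is weakly closed.

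A natural candidate to try is a product of $2$-minors of $X_n$ whose diagonal leading term is squarefree — for instance $f=\prod_{i=1}^{n-1}f_{i,i+1}$, the product of the natural generators of the binomial edge ideal of the path on $1,\dots,n$, with $\init_\prec(f)=x_1\cdots x_{n-1}\,y_2\cdots y_n$; the exact choice is dictated by the characterization one is aiming for. The base case is $G=K_n$: the minimal primes of $(f)$ are the principal ideals $(f_{i,i+1})$, so $J_{P_n}=\sum_i(f_{i,i+1})\in\mathcal{C}_f$, and since $P_n$ is connected the description of minimal primes in \cite[Theorem 3.2]{HHHKR} identifies $J_{K_n}$ (the ideal of $2$-minors of $X_n$) with the prime $P_\emptyset$ of $J_{P_n}$, whence $J_{K_n}\in\mathcal{C}_f$. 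The heart of the argument is a structural induction. Given a weakly closed $G$ on $[n]$, labelled so that $\{i,k\}\in E(G)$ forces $\{i,j\}\in E(G)$ or $\{j,k\}\in E(G)$ for $i<j<k$, I would delete the last vertex $n$ — the restriction $G\setminus n$ is again weakly closed, so $J_{G\setminus n}\in\mathcal{C}_f$ by induction — and then realize $J_G=\bigcap_S P_S(G)$, showing that each minimal prime $P_S(G)$ (a sum of the monomial primes $(x_i,y_i)$ for $i\in S$ and of the determinantal ideals $I_2(X_T)$ generated by the $2$-minors on the column sets $T$ of the connected components of $G\setminus S$) already lies in $\mathcal{C}_f$, using the weak-closedness condition to control both the neighbourhood of $n$ and the shape of $G\setminus S$. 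Closure of $\mathcal{C}_f$ under sums, intersections, and minimal primes then yields $J_G\in\mathcal{C}_f$, and so $R/J_G$ is $F$-pure.

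I expect the structural induction to be the main obstacle, in two intertwined respects. Combinatorially, one must show that a single labeling witnessing weak-closedness simultaneously controls every cut set $S$ appearing in \cite[Theorem 3.2]{HHHKR}, so that each constituent $I_2(X_T)$ and each monomial prime $(x_i,y_i)$ is reachable inside $\mathcal{C}_f$ — this is precisely where co-comparability graphs get singled out, and where the argument would fail for a graph that is not weakly closed. Algebraically, one must verify the relevant ideal identities on the nose, not merely up to radical, which typically forces one to choose the right sequence of sum/intersection/minimal-prime operations rather than arbitrary ones. Running the same scheme with $X_n$ replaced by a generic $m\times n$ matrix should then give Matsuda's statement for generalized binomial edge ideals; and if committing to a single $f$ proves awkward, one may instead attach to each weakly closed $G$ its own polynomial $f_G$ with squarefree leading term and show $J_G\in\mathcal{C}_{f_G}$, which suffices equally well for $F$-purity.
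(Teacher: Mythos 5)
Your overall strategy is the right one and is the one the paper follows: show $J_G\in\mathcal{C}_f$ by writing $J_G=\bigcap_S P_S$ and proving each $P_S$ lies in $\mathcal{C}_f$, then invoke compatible Frobenius splitting. But there are two genuine gaps. First, your concrete candidate $f=\prod_{i=1}^{n-1}f_{i,i+1}$ does not work. Since $\lt(f)=x_1\cdots x_{n-1}y_2\cdots y_n$ omits $y_1$ and $x_n$, the degeneration property of Knutson ideals ($I\in\mathcal{C}_f$ implies $\lt(I)\in\mathcal{C}_{\lt(f)}$, and $\mathcal{C}_m$ for a squarefree monomial $m$ consists of monomial ideals in the variables dividing $m$) forces $y_1\notin I$ for every proper $I\in\mathcal{C}_f$. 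Yet already for $G=K_{2,2}$ (a weakly closed graph), under \emph{every} labeling the part of the bipartition containing vertex $1$ is a cut set $S$ giving a minimal prime $P_S\ni y_1$; since minimal primes of members of $\mathcal{C}_f$ are members, $J_{K_{2,2}}\notin\mathcal{C}_{\prod f_{i,i+1}}$ for any labeling. This is exactly why the paper takes $f=y_1f_{12}\cdots f_{n-1,n}x_n$: the extra factors make $\lt(f)=\prod_i x_iy_i$ (so $f$ genuinely defines a splitting, rather than a polynomial of too-small degree), and they put $(y_1)$ and $(x_n)$ among the minimal primes of $(f)$, which is what allows one to reach the ideals $I_1(X_{[i,i]})=(x_i,y_i)$ for \emph{all} $i$, including $i=1,n$ --- these are unavoidable summands of the $P_S$.

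Second, the heart of the argument is missing. You say you would ``use the weak-closedness condition to control the shape of $G\setminus S$,'' but this is precisely the step that needs to be carried out, and your proposed induction on deleting vertex $n$ is not developed. The paper's argument is direct and does not induct on vertices: for a connected component $G_i$ of $G_{[n]\setminus S}$ whose vertex set has a gap, weak closedness forces every vertex $l$ in the gap to lie in $S$ (otherwise $l$ would be joined to $G_i$ and belong to it); hence one may replace $J_{\widetilde{G}_i}$ by $I_2(X_{[j_1,j_{t_i}]})$ without changing $P_S$, because the added minors already lie in $\bigl(\bigcup_{s\in S}\{x_s,y_s\}\bigr)$. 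This rewrites $P_S$ as a sum of determinantal ideals on \emph{adjacent} columns, at which point \cite[Theorem 2.1]{Se2} (that all $I_t(X_{[a,b]})$ belong to $\mathcal{C}_f$ for this $f$) finishes the proof. Your base-case observation that $I_2(X_{[a,b]})$ arises as $P_\emptyset$ of a path ideal is correct and is in the spirit of how that cited result is proved, but without the gap-filling argument and the corrected $f$ the proposal does not yet yield the theorem.
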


The above result, together with Theorem \ref{thmHHH}, motivates us to continue the work of Matsuda on binomial edge ideals associated with weakly-closed graphs, with a special focus on their interaction with Knutson ideals (for a short overview of Knutson ideals, see Section 2). In particular, similarly to what has been done in \cite{HHHKR} for closed graphs, we give an algebraic characterization of weakly closed graphs in terms of their binomial edge ideals.

\begin{mthm*}[Theorem \ref{cf-wc}] $G$ is a weakly closed graph on $[n]$ if and only if (there exists a labeling such that) $J_G$ is a Knutson ideal associated with $f=y_1 f_{12} \ldots f_{n-1n}y_n  \in R$.
\end{mthm*}

To prove this theorem, we start off by proving that binomial edge ideals of weakly closed graphs are Knutson ideals. By the properties of Knutson ideals, the following result also gives an alternative proof of Theorem \ref{Mafp} in positive characteristic:

\begin{propA}[Proposition \ref{wc-cf}]\label{introwc-cf}
Let $G$ be a weakly closed graph on $[n]$. Then its binomial edge ideal $J_G$  is a Knutson ideal. \par 
In particular, if $\mathbb{K}$ has positive characteristic, then $\mathbb{K}[X]/J_G$ is $F$-pure.
\end{propA}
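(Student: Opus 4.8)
The plan is to prove the stronger statement that $J_G$ lies in $\mathcal C_f$, the family of Knutson ideals associated with $f=y_1f_{12}f_{23}\cdots f_{n-1,n}y_n\in R$, after fixing a labeling of the vertices that witnesses weak closedness; the $F$-purity of $\mathbb K[X]/J_G$ when $\mathrm{char}\,\mathbb K=p>0$ is then immediate from the general fact, recalled in Section 2, that every Knutson ideal defines an $F$-pure quotient. Throughout I use only that $\mathcal C_f$ contains $(f)$ and is closed under sums, finite intersections, and passage to minimal primes (so, in particular, its members are radical).

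I would first isolate two families of ``building blocks''. Since $f$ is the product of the irreducible polynomials $y_1,y_n,f_{12},\dots,f_{n-1,n}$, its minimal primes are exactly $(y_1),(y_n)$ and the $(f_{i,i+1})$ for $1\le i\le n-1$, so all of these lie in $\mathcal C_f$. Consequently: (i) for $1\le a<b\le n$ the ideal $J_{K_{\{a,\dots,b\}}}$ of $2$-minors of the columns $a,\dots,b$ of $X_n$ lies in $\mathcal C_f$, since $(f_{a,a+1},\dots,f_{b-1,b})$ is the binomial edge ideal of the path on $\{a,\dots,b\}$ --- a sum of minimal primes of $(f)$, hence in $\mathcal C_f$ --- and $J_{K_{\{a,\dots,b\}}}=P_\emptyset$ is one of its minimal primes by \cite[Theorem 3.2]{HHHKR}; and (ii) $(x_i,y_i)\in\mathcal C_f$ for every $i$, being a minimal prime of $(f_{i-1,i},f_{i,i+1})$ when $1<i<n$, of $(y_1,f_{12})$ when $i=1$, and of $(f_{n-1,n},y_n)$ when $i=n$ --- each of these three ideals being a sum of minimal primes of $(f)$.

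The heart of the argument is the following structural claim. Fix any $S\subseteq[n]$ and let $V_1,\dots,V_t$ be the vertex sets of the connected components of the induced subgraph $G[\,[n]\setminus S\,]$. Claim: for each $c$, every integer of the interval $\overline{V_c}:=\{\min V_c,\dots,\max V_c\}$ belongs to $V_c\cup S$. Indeed, if some $v\in\overline{V_c}$ lay outside $V_c\cup S$, a path in $G[V_c]$ from $\min V_c$ to $\max V_c$ (which exists since $G[V_c]$ is connected) would contain an edge $\{u,w\}$ with $u<v<w$; weak closedness forces $\{u,v\}\in E(G)$ or $\{v,w\}\in E(G)$, so $v\notin S$ would be adjacent to a vertex of $V_c$, contradicting that $V_c$ is a connected component of $G[\,[n]\setminus S\,]$. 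Thus $\overline{V_c}\setminus V_c\subseteq S$, and every generator $f_{jk}$ of $J_{K_{\overline{V_c}}}$ with $\{j,k\}\not\subseteq V_c$ has an index in $\overline{V_c}\setminus V_c$, hence lies in $\big(\{x_i,y_i:i\in S\}\big)$. Therefore, using \cite[Theorem 3.2]{HHHKR} for the first equality,
\[
P_S(G)=\big(\{x_i,y_i:i\in S\}\big)+\sum_{c=1}^{t}J_{K_{V_c}}=\big(\{x_i,y_i:i\in S\}\big)+\sum_{c=1}^{t}J_{K_{\overline{V_c}}}\,,
\]
where terms with $|V_c|=1$ are zero. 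By (i), (ii) and closure under sums, $P_S(G)\in\mathcal C_f$. Finally, $J_G$ is radical and its minimal primes are among the $P_S(G)$ \cite[Theorem 3.2]{HHHKR}, so $J_G=\bigcap_{S\subseteq[n]}P_S(G)$; closure of $\mathcal C_f$ under intersection gives $J_G\in\mathcal C_f$, and the $F$-purity follows as above.

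I expect the structural claim to be the only genuinely delicate step, and the only place weak closedness is used: for a graph that is not weakly closed it fails, consistently with the equivalence in the Main Theorem, and the individual ``long'' binomials $f_{ij}$ with $j>i+1$ are not expected to lie in $\mathcal C_f$. Its role is precisely to force every vertex lying in a ``gap'' of a component of $G\setminus S$ into $S$, so that modulo the variables indexed by $S$ one may replace each $J_{K_{V_c}}$ by the interval-completed ideal $J_{K_{\overline{V_c}}}$ and thereby remain inside $\mathcal C_f$. A minor point to bear in mind is that a sum of Knutson ideals is automatically radical, so no saturation step is needed in the displayed identity.
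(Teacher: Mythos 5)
Your proof is correct and follows essentially the same route as the paper: decompose $J_G=\bigcap_S P_S$ via \cite[Theorem 3.2]{HHHKR}, use weak closedness to show every vertex in a ``gap'' of a component of $G[[n]\setminus S]$ lies in $S$, replace each component by its interval hull without changing $P_S$, and conclude by the closure properties of $\mathcal{C}_f$; the only difference is that you rederive the building blocks $I_2(X_{[a,b]})\in\mathcal{C}_f$ and $(x_i,y_i)\in\mathcal{C}_f$ from minimal primes of paths instead of citing them from \cite[Theorem 2.1]{Se2} as the paper does. One cosmetic slip: the last factor of $f$ should be $x_n$ rather than $y_n$ (otherwise $\lt(f)$ is not squarefree), so the minimal primes of $(f)$ are $(y_1)$, $(x_n)$ and the $(f_{i,i+1})$, and your case $i=n$ in (ii) should use $(f_{n-1,n},x_n)$, which works just as well.
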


Moreover, the proof of this result suggests the following characterization of weakly closed graphs in terms of minimal primes of their binomial edge ideals.

\begin{propA}[Proposition \ref{psps}] \label{introprimes}$G$ is a weakly closed graph if and only if the minimal primes of its binomial edge ideal can be written as a sum of determinantal ideals on (disjoint) adjacent columns.
\end{propA}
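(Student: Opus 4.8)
The plan is to combine the combinatorial description of the minimal primes of $J_G$ from \cite[Theorem 3.2]{HHHKR} with a short path–chasing argument. Recall that, writing $c(S)$ for the number of connected components of the induced subgraph $G_{[n]\setminus S}$, the minimal primes of $J_G$ are exactly the ideals
\[
P_S(G)=\Bigl(\bigcup_{i\in S}\{x_i,y_i\}\Bigr)+J_{\tilde G_1}+\cdots+J_{\tilde G_{c(S)}},
\]
where $S$ runs over the subsets of $[n]$ with $S=\emptyset$ or $c(S\setminus\{i\})<c(S)$ for every $i\in S$ (call such an $S$ \emph{admissible}), the $G_j$ are the connected components of $G_{[n]\setminus S}$, and $\tilde G_j$ is the complete graph on $V(G_j)$. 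Here $J_{\tilde G_j}$ is the ideal of $2$-minors of $X_n$ supported on the columns $V(G_j)$, and, once the variables $x_i,y_i$ with $i\in S$ are set to zero, it is a determinantal ideal ``on adjacent columns'' precisely when $V(G_j)$ is an interval of the totally ordered set $[n]\setminus S$ --- that is, when no $y\in[n]\setminus S$ lies strictly between two vertices of $G_j$ without itself belonging to $G_j$. Since distinct components of $G_{[n]\setminus S}$ are vertex–disjoint, the proposition is then equivalent to the purely graph–theoretic statement: \emph{for a suitable labeling $\sigma$ of $[n]$, $G$ is weakly closed with respect to $\sigma$ if and only if, for that same $\sigma$ and every admissible $S$, every connected component of $G_{[n]\setminus S}$ is an interval of $[n]\setminus S$.} (This reformulation is, in effect, already present in the proof of Proposition \ref{wc-cf}.)

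For the ``only if'' direction, fix a labeling for which $G$ is weakly closed and let $S\subseteq[n]$ be arbitrary. Suppose some component $C$ of $G_{[n]\setminus S}$ is not an interval of $[n]\setminus S$: then there are $x<y<z$ with $x,z\in V(C)$, $y\in[n]\setminus S$, and $y\notin V(C)$. Choose a path $x=u_0,u_1,\dots,u_m=z$ inside $C$; all its vertices lie in $[n]\setminus S$ and none equals $y$, so since $u_0<y<u_m$ there is an index $t$ with $u_t<y<u_{t+1}$. As $\{u_t,u_{t+1}\}\in E(G)$, weak closedness forces $\{u_t,y\}\in E(G)$ or $\{y,u_{t+1}\}\in E(G)$; since $u_t,u_{t+1},y\in[n]\setminus S$, this is an edge of $G_{[n]\setminus S}$ joining $y$ to a vertex of $C$, so $y\in V(C)$, a contradiction. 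Hence every component of $G_{[n]\setminus S}$ is an interval of $[n]\setminus S$, for every $S$ and in particular for the admissible ones.

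For the ``if'' direction, suppose $\sigma$ is a labeling for which every minimal prime of $J_G$ has the stated form, and assume, for contradiction, that $G$ is not weakly closed with respect to $\sigma$: there exist $a<b<c$ with $\{a,c\}\in E(G)$ and $\{a,b\},\{b,c\}\notin E(G)$. Let $S\subseteq N_G(b)$ be a minimal subset whose deletion separates $b$ from $c$ in $G$; such an $S$ exists because deleting all of $N_G(b)$ isolates $b$ (and $S=\emptyset$ if $b$ and $c$ already lie in distinct components of $G$). None of $a,b,c$ is a neighbor of $b$, so $a,b,c\notin S$; hence the edge $\{a,c\}$ survives in $G_{[n]\setminus S}$ and $a,c$ lie in a common component $C$, while $b$ lies in a different one. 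Since $b\in[n]\setminus S$ and $a<b<c$, the component $C$ is not an interval of $[n]\setminus S$. Moreover $S$ is admissible: for each $v\in S$, minimality means $S\setminus\{v\}$ does not separate $b$ from $c$, so reinstating $v$ yields a path from $b$ to $c$ and thereby merges the (distinct) components of $b$ and $c$; since adjoining one vertex can only merge components, this strictly decreases their number, i.e.\ $c(S\setminus\{v\})<c(S)$. By \cite[Theorem 3.2]{HHHKR}, $P_S(G)$ is therefore a minimal prime of $J_G$ that is not of the stated form --- a contradiction. Hence $G$ is weakly closed with respect to $\sigma$.

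The heart of the matter is the ``if'' direction, and within it the choice of $S$: it must keep $a$, $b$, $c$ outside $S$, place $a$ and $c$ in one component and $b$ in another, and simultaneously be admissible so that $P_S(G)$ is a genuine \emph{minimal} prime rather than an embedded one. A minimal $b$--$c$ separator contained in $N_G(b)$ reconciles all three demands, admissibility dropping out of minimality; the remaining verifications are routine bookkeeping with the description of $\Min(J_G)$ in \cite[Theorem 3.2]{HHHKR}.
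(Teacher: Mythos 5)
Your proof is correct. The ``only if'' direction is essentially the paper's own argument from Proposition \ref{wc-cf}: a vertex $y\notin S$ sitting in a ``gap'' of a component $C$ would be straddled by some edge of a path inside $C$, and weak closedness would then pull $y$ into $C$. The ``if'' direction, however, takes a genuinely different route. The paper also starts from a triple $k<l<m$ witnessing the failure of weak closedness, but it chooses $S$ to be the set of second vertices of all paths from $k$ (resp.\ $m$) to $l$, and then simply \emph{asserts} that the resulting $P_S$ is a minimal prime; the cut-point condition $c(S\setminus\{v\})<c(S)$ from \cite[Theorem 3.2]{HHHKR} is never verified there, and for that particular $S$ it can in fact fail (a chosen second vertex may, after deletion, have all its remaining neighbours in a single component, so reinstating it merges nothing). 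Your choice of $S$ --- a minimal $b$--$c$ separator contained in $N_G(b)$ --- repairs exactly this point: it keeps $a,b,c$ outside $S$ for free, forces $b$ out of the component of the surviving edge $\{a,c\}$, and minimality of the separator hands you admissibility, since reinstating any $v\in S$ reconnects $b$ to $c$ and hence strictly decreases the number of components. So your argument buys a complete and correct verification of minimality where the paper's is at best implicit. The one step you gloss over at the same level as the paper is the passage from ``some component of $G_{[n]\setminus S}$ is not an interval'' to ``$P_S$ cannot be written \emph{in any way} as a sum of determinantal ideals on disjoint adjacent columns''; this needs the easy observation that such a presentation of $P_S$ is forced (equivalently, that $f_{ac}\notin P_S$ when $a$ and $c$ lie in distinct components and outside $S$), which the paper likewise dismisses as ``straightforward to see''.
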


The advantage of this alternative proof of Matsuda's theorem is that it easily extends to a larger class of ideals, introduced by Rauh \cite{Ra} and called \emph{generalized binomial edge ideals}. Thus we get the following:

\begin{propA}[Proposition \ref{wc-cf-gen}]
Let $G$ be a weakly closed graph on $[n]$. Then its generalized binomial edge ideal $\mathfrak{J}_G$  is a Knutson ideal. \par 
In particular, if $\mathbb{K}$ has positive characteristic, then $\mathbb{K}[X]/\mathfrak{J}_G$ is $F$-pure.
\end{propA}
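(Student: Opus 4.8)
The plan is to bootstrap the ordinary case, Proposition~\ref{wc-cf}, one pair of rows at a time. Write $X=(x_{ij})_{1\le i\le m,\,1\le j\le n}$ for the generic matrix and $R=\mathbb{K}[X]$. For $1\le a<b\le m$ let $J_G^{(a,b)}\subseteq R$ denote the binomial edge ideal of $G$ attached to the $2\times n$ submatrix of $X$ on rows $a$ and $b$, i.e.\ the ideal generated by the $2$-minors $x_{ak}x_{bl}-x_{al}x_{bk}$ with $\{k,l\}\in E(G)$. By the very definition of the generalized binomial edge ideal, the generators of $\mathfrak{J}_G$ are the union over all pairs $a<b$ of the generators of the $J_G^{(a,b)}$, so
\[
\mathfrak{J}_G \;=\; \sum_{1\le a<b\le m} J_G^{(a,b)}.
\]
It therefore suffices to produce a single polynomial $f\in R$ for which $J_G^{(a,b)}\in\mathcal{C}_f$ for every pair $a<b$, and then use that $\mathcal{C}_f$ is closed under finite sums.

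For each pair $a<b$ put
\[
f_{a,b}\;=\;x_{b1}\Big(\prod_{i=1}^{n-1}\big(x_{ai}x_{b,i+1}-x_{a,i+1}x_{bi}\big)\Big)x_{bn}\;\in\;R ,
\]
which is precisely the polynomial of Proposition~\ref{wc-cf} written for the rows $a$ and $b$ (with $x_{a*}$ in the role of the $x$-variables and $x_{b*}$ in the role of the $y$-variables), the remaining variables being inert. Carried out in $R$, the proof of Proposition~\ref{wc-cf} gives $J_G^{(a,b)}\in\mathcal{C}_{f_{a,b}}$; crucially, that derivation proceeds only by the operations of sum, intersection and passage to minimal primes, starting from the minimal primes of $(f_{a,b})$ --- namely $(x_{b1})$, $(x_{bn})$ and $\big(x_{ai}x_{b,i+1}-x_{a,i+1}x_{bi}\big)$, $1\le i\le n-1$. (This is where Proposition~\ref{psps} enters, applied to the binomial edge ideal $J_G^{(a,b)}$: because $G$ is weakly closed, the connected components of the induced subgraph $G_{[n]\setminus S}$ are intervals for every $S\subseteq[n]$, so each minimal prime of $J_G^{(a,b)}$ is a sum of column ideals $(x_{ai},x_{bi})$, $i\in S$, together with ideals of $2$-minors on disjoint blocks of consecutive columns --- and every such ideal is obtained from the listed seeds by the three operations, exactly as in the ordinary proof.)

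Now set $f=\prod_{1\le a<b\le m}f_{a,b}\in R$. Each factor $f_{a,b}$ is a product of pairwise non-associate irreducibles, and no irreducible factor occurring in one $f_{a,b}$ is associate to a factor of a different $f_{a',b'}$; hence $\Min(f)=\bigcup_{a<b}\Min(f_{a,b})$, and in particular $\Min(f_{a,b})\subseteq\mathcal{C}_f$ for every pair. Repeating, inside $\mathcal{C}_f$, the derivation of the previous paragraph --- all the needed generating primes now belong to $\mathcal{C}_f$, and $\mathcal{C}_f$ is closed under the same operations --- yields $J_G^{(a,b)}\in\mathcal{C}_f$ for all $a<b$, and therefore $\mathfrak{J}_G=\sum_{a<b}J_G^{(a,b)}\in\mathcal{C}_f$. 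Thus $\mathfrak{J}_G$ is a Knutson ideal associated with $f$. The $F$-purity statement then follows from the properties of Knutson ideals already used for Proposition~\ref{wc-cf}: in characteristic $p>0$ every ideal of $\mathcal{C}_f$ is compatibly Frobenius split, so $R/\mathfrak{J}_G$ is $F$-split and hence $F$-pure. As a byproduct --- and in contrast to the behaviour of arbitrary generalized binomial edge ideals --- $\mathfrak{J}_G$ is radical for weakly closed $G$, since Knutson ideals are radical.

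The step I expect to require the most care is verifying that $f=\prod_{a<b}f_{a,b}$ is an admissible defining polynomial for a Knutson family, i.e.\ that it induces a Frobenius splitting of $R$ with respect to which $V(f)$ is compatibly split; this is the hypothesis underpinning both the minimal-prime/colon machinery and the $F$-purity conclusion. For $m=2$ this is contained in Proposition~\ref{wc-cf}, and I expect the general case to go through by the same method --- either by a direct leading-term/Cartier-operator computation for the product $f$, or by identifying $\mathcal{C}_f$ with the family of subschemes of $\mathbb{A}^{mn}$ compatibly split by an explicit splitting extending the one used for two rows. The remaining ingredient, the generalization of Proposition~\ref{psps} describing $\Min(\mathfrak{J}_G)$ in terms of determinantal ideals on consecutive columns, is purely combinatorial, rests only on the fact that components of $G_{[n]\setminus S}$ are intervals, and is independent of $m$.
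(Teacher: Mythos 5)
Your reduction $\mathfrak{J}_G=\sum_{1\le a<b\le m}J_G^{(a,b)}$ is correct, and so is the observation that $\mathcal{C}_f$ is closed under finite sums; the proof therefore stands or falls on producing \emph{one} admissible $f$ containing all the row-pair families. This is exactly where the argument breaks. The polynomial $f=\prod_{a<b}f_{a,b}$ does not satisfy the hypothesis of Definition \ref{K.I.}: each $f_{a,b}$ involves only the variables in rows $a$ and $b$, and its leading term is (up to the slip noted below) $\prod_{j=1}^{n}x_{aj}x_{bj}$, so in the product each variable $x_{cj}$ occurs in the leading terms of the $m-1$ factors $f_{c,b}$, $b\neq c$, giving $\lt(f)=\prod_{c,j}x_{cj}^{\,m-1}$. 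For $m\ge 3$ this is not squarefree, so $f$ does not define a Frobenius splitting and the family $\mathcal{C}_f$ --- together with all the properties you invoke (radicality, stability of minimal primes, $F$-purity) --- is simply not available for this $f$. This is precisely the step you flagged as needing care, and it does not go through ``by the same method''; there is no repair by rescaling or reordering the factors, since the multiplicity $m-1$ is forced by the combinatorics of the pairs. (A smaller slip: the last factor of $f_{a,b}$ should be $x_{an}$, the analogue of $x_n$, not $x_{bn}$; as written even a single $f_{a,b}$ has $x_{bn}^2$ in its leading term.)

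The paper avoids the row-pair decomposition altogether. It takes the standard $f$ for the full $m\times n$ generic matrix from \cite[Theorem 2.1]{Se2} (the product of the minors along the diagonals of $X$, whose leading term is the squarefree product of \emph{all} variables), and uses Rauh's primary decomposition of $\mathfrak{J}_G$: each minimal prime $\mathfrak{P}_S$ is the ideal of all variables in the columns of $S$ plus the ideals $I_2$ of the full $m\times k$ submatrices on the connected components of $G_{[n]\setminus S}$. Weak closedness makes those components consecutive after absorbing the gaps into $S$ (the same argument as in Proposition \ref{wc-cf}), so each $\mathfrak{P}_S$ is a sum of determinantal ideals on adjacent columns of $X$, and these lie in $\mathcal{C}_f$ by the general form of Lemma \ref{lemSe2}. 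Note that your route cannot easily be rerouted through this standard $f$ either: \cite{Se2} places in $\mathcal{C}_f$ the determinantal ideals of adjacent columns using \emph{all} rows, whereas your seeds --- the individual $2$-minors $(x_{ai}x_{b,i+1}-x_{a,i+1}x_{bi})$ and the ideals $(x_{as},x_{bs})$ on two prescribed rows --- are not among the ideals known to belong to $\mathcal{C}_f$. Keeping all $m$ rows together in each column block is what makes the paper's argument work.
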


%This result, together with the characterization of weakly closed graphs stated in Proposition A, allows us to prove that the converse of Proposition \ref{wc-cf} is still true. Thus, the set of binomial edge ideals in $\mathcal{C}_f$ coincides with that of binomial edge ideals of weakly closed graphs (see Theorem \ref{cf-wc}).

In Section 4 we use Proposition \ref{introprimes} to prove that the converse of Proposition \ref{introwc-cf} is still true. This completes the proof of the main theorem, i.e. the set of binomial edge ideals in $\mathcal{C}_f$ coincides with that of binomial edge ideals of weakly closed graphs (Theorem \ref{cf-wc}).\par 
For this purpose, we will also need the following description of all minimal primes of Knutson ideals associated with $f=y_1 f_{12} f_{23} \ldots f_{n-1n}y_n  \in R$.

\begin{propA}[Proposition \ref{PrIdCf}]\label{introMP} Let $I$ be a Knutson ideal associated with $f$ and let $P \in \Min (I)$. Then
$$P= \left(\left(y_1,\ldots,y_{k-1}\right)+\left(x_u\right)_{U \subset \{1,\ldots, k-1\}}\right)+L+\left(\left(x_{l+1},\ldots,x_n\right)+\left(y_v\right)_{V \subset \{l+1,\ldots, n\}}\right)$$
where $L\subset \mathbb{K}[x_k,x_{k+1},\ldots, x_l]$ is a minimal prime of the binomial edge ideal of a weakly closed graph and each of the three summands may possibly be the zero ideal.
\end{propA}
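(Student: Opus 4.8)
The plan is to analyze the Knutson-ideal structure coming from the specific polynomial $f=y_1 f_{12}f_{23}\cdots f_{n-1n}y_n$, and to show that every minimal prime decomposes according to the factorization of $f$. The starting point is the definition of Knutson ideals: $I$ belongs to $\mathcal{C}_f$ means that $I$ is obtained from the principal ideal $(f)$ by iterating the two operations ``add the colon ideal of a minimal prime of $f$'' and ``intersect/add previously constructed ideals'', so in particular every $P\in\Min(I)$ is (by the recursive construction, using that $\mathcal{C}_f$ is closed under taking minimal primes — this is part of the basic theory of Knutson ideals recalled in Section 2) a minimal prime of some ideal built from the irreducible factors $y_1,\ y_2,\ \dots,\ y_n$ and $f_{12},\dots,f_{n-1n}$. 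So the combinatorial content is: which prime ideals can arise as minimal primes of sums of ideals generated by subsets of $\{y_1,\dots,y_n, f_{12},\dots,f_{n-1n}\}$ together with the colon operation, and the claim is precisely that they have the three-block shape displayed.

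First I would isolate the ``interval'' on which the binomial part lives. Given $P\in\Min(I)$, let $k$ be minimal and $l$ maximal such that $P$ contains no variable $x_i$ or $y_i$ that would force the binomial generators $f_{i-1,i}, f_{i,i+1}$ to degenerate on the block $\{k,\dots,l\}$; concretely, one shows $P$ contains $y_1,\dots,y_{k-1}$ (and possibly some $x_u$ with $u<k$), contains $x_{l+1},\dots,x_n$ (and possibly some $y_v$ with $v>l$), and that $P\cap\mathbb{K}[x_k,\dots,x_l]$ — or rather the contraction of $P$ to the polynomial ring in the $x_k,\dots,x_l,y_k,\dots,y_l$ — is forced to be a minimal prime of a binomial edge ideal. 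The key step is that, by Proposition \ref{introwc-cf}/\ref{introprimes} the binomial edge ideal $J_G$ of a weakly closed graph $G$ lies in $\mathcal{C}_f$ and its minimal primes are exactly the sums of determinantal ideals on disjoint adjacent columns; conversely, one checks that the only ``binomial-type'' prime ideals producible inside $\mathcal{C}_f$ are of this form, because each colon step against a factor $f_{j,j+1}$ can only enlarge the support of the determinantal blocks by an adjacent column. I would make this precise by induction on the number of steps in the Knutson construction of $I$, tracking at each step how $\Min$ of the new ideal is obtained from $\Min$ of the old one via $\Min((J+f_{j,j+1}):f_{j,j+1}^\infty)$ and $\Min(J_1+J_2)$, and verifying the three-block shape is preserved under both operations (for the sum, one uses that the ``linear'' blocks $(y_1,\dots,y_{k-1})+(x_u)_U$ and $(x_{l+1},\dots,x_n)+(y_v)_V$ of two such primes intersect cleanly, and that the middle binomial-edge-ideal-prime of a weakly closed graph is stable under adding an adjacent determinantal block — which is again the weakly closed condition).

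The main obstacle I anticipate is the middle step: showing that the contraction $L$ of $P$ to $\mathbb{K}[x_k,\dots,x_l]$ (after the outer linear blocks have been stripped off) is not merely some prime containing a binomial edge ideal but is itself a minimal prime of the binomial edge ideal of a genuinely weakly closed graph. This requires going back to the combinatorial description of minimal primes of binomial edge ideals from \cite[Theorem 3.2]{HHHKR} — each minimal prime is indexed by a subset $S$ of vertices and has the form $\bigl(\bigcup_{i\in S}\{x_i,y_i\}\bigr)+J_{\widetilde G_1}+\dots+J_{\widetilde G_{c(S)}}$ with the $\widetilde G_t$ the connected components of $G\setminus S$ made complete — and checking that the constraints imposed by the Knutson construction with this particular $f$ force the relevant components to be intervals of consecutive integers, i.e. that $G\setminus S$ (completed) is a disjoint union of complete graphs on consecutive vertices, which is exactly a minimal prime of a weakly closed graph by Proposition \ref{introprimes}. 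Once that identification is made, reassembling the three blocks and confirming each may independently degenerate to zero (the extreme cases $k=1$, $l=n$, or $U=V=\emptyset$) is routine.
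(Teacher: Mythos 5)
Your overall skeleton agrees with the paper's: use the fact that $\mathcal{C}_f$ is built from $\Min((f))$ by sums and intersections, invoke distributivity of sums over intersections (Remark \ref{sumint}) to reduce to iterated sums of minimal primes, and then argue by induction that the three-block shape is preserved. The gap is that the two steps carrying all the weight are asserted rather than proved, and the mechanisms you gesture at would not close them. The critical one is the middle block. You correctly identify it as the main obstacle, but your proposed resolution --- appealing to the description of $\Min(J_G)$ from \cite[Theorem 3.2]{HHHKR} together with the claim that ``each colon step against a factor $f_{j,j+1}$ can only enlarge the support of the determinantal blocks by an adjacent column'' --- does not engage with the actual difficulty: the ideals whose minimal primes must be controlled in the inductive step are sums $P+Q$ of two primes with $P=\overline{P}$, $Q=\overline{Q}$. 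Such a sum is in general not prime and is not a binomial edge ideal, so the combinatorial description of minimal primes of binomial edge ideals does not apply to it, and no colon against $f_{j,j+1}$ is involved. The paper's Lemma \ref{lemmaP+Q} resolves exactly this point with a device absent from your sketch: choose $n$ large, intersect $P$ and $Q$ with $I_2(X_{[1,n]})$, recognize these intersections as binomial edge ideals of weakly closed graphs $\widetilde{G}_1$, $\widetilde{G}_2$ (via the observation that a prime of the form $\overline{P}$ intersected with $I_2(X)$ is the binomial edge ideal of a join of complete graphs), then use Remark \ref{sumint} to obtain $J_{\widetilde{G}_1\cup\widetilde{G}_2}=I_2(X)\cap(P+Q)$ and verify irredundancy, so that $\Min(P+Q)$ sits inside $\Min(J_{\widetilde{G}_1\cup\widetilde{G}_2})$, which Proposition \ref{psps} controls. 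Without this (or an equivalent argument), ``the three-block shape is preserved under sums'' is an unproved assertion.

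The second missing piece is the production and propagation of the outer linear blocks. Note first that the linear irreducible factors of $f$ are only $y_1$ and $x_n$, not all of $y_1,\dots,y_n$ as your sketch states. The blocks $(y_1,\dots,y_{k-1})+(x_u)_{U}$ do not appear by fiat: they come from the observation that modulo $y_1$ one has $f_{12}\equiv x_1y_2$, so every minimal prime of $(y_1,f_{12},\dots,f_{k-1,k})$ contains $x_1$ or $y_2$, and iterating this dichotomy yields Proposition \ref{lemma1}; the same dichotomy reappears for the cross-terms $x_uy_v$ created when two primes with different cut points are summed (Proposition \ref{lemma2}), and one must check that this iteration terminates and always lands back in the three-block form. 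Your proposal states the conclusion of this bookkeeping but supplies no mechanism for it, so as written the induction cannot be carried out.
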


The proof of Proposition \ref{introMP} is quite technical. However, it is worth noticing that knowing all minimal primes of Knutson ideals associated with $f$ is a quite strong result in the study of Knutson ideals of generic matrices. In fact, Knutson ideals are radical and minimal prime ideals can be considered as the \lq \lq building blocks" of radical ideals. Hence, heuristically, having a characterization of all minimal primes of Knutson ideals is not so far from having a characterization of Knutson ideals themselves.  In this sense, this is a step forward towards a partial solution of the problem of finding a characterization of all Knutson ideals of generic matrices (see \cite{Se2}), in the specific case of $2 \times n$ generic matrices. The general result could have interesting consequences on Gr\"obner bases of determinantal-like ideals.\\

\textbf{Acknowledgements.} I would like to thank Matteo Varbaro for several enlightening and helpful discussions.  I am also grateful to Bruno Benedetti for some observations on the combinatorial aspects of the topic, and on the connection with determinatal facet ideals.

%\begin{mthm} $G$ is a weakly closed graph on $[n]$ if and only if $J_G$ is a Knutson ideal for a suitable choice of $f$. 
%\end{mthm}

\section{An overview of Knutson ideals}

Knutson ideals were first introduced by Conca and Varbaro in \cite{CV} and they were named after Knutson's work \cite{Kn} on compatibly split ideals and degeneration.
 
 \begin{defn}[Knutson ideals] \label{K.I.} Let $f \in S= \mathbb{K}[x_1,\ldots,x_n]$ be a polynomial such that its leading term $\lt (f)$ is a squarefree monomial  for some term order $\prec$ .
Define $\mathcal{C}_f$ to be the smallest set of ideals satisfying the following conditions:
\begin{enumerate}
\item[1.] $(f) \in \mathcal{C}_f$;
\item[2.]  If $I \in \mathcal{C}_f$ then $I:J \in \mathcal{C}_f$ for every ideal $J \subseteq S$;
\item[3.] If $I$ and $J$ are in $\mathcal{C}_f$ then also $I+J$ and $I \cap J$ must be in $\mathcal{C}_f$.
\end{enumerate} 
If $I$ is an ideal in $\mathcal{C}_f$, we say that I is a \emph{Knutson ideal associated with} $f$. More generally, we say that $I$ is a \emph{Knutson ideal} if $I \in \mathcal{C}_f $ for some $f$.
 \end{defn}
 
 By the assumption on its initial form, if $\deg f=n$, then $f$ defines a splitting map on $S$. Therefore, Knutson ideals turn out to be compatibly split ideals with respect to this map, in the sense of Schwede \cite{Sc}. As a consequence, they define $F$-pure rings in positive characteristic. In addition, Knutson ideals
have square-free initial ideals (see \cite{Kn},\cite{Se1}). Hence, the extremal Betti numbers, Castelnuovo-Mumford regularity, and depth of Knutson ideals coincide
with the corresponding numerical invariants of their initial ideals (see \cite{CV}). Lastly, Gr\"obner bases of Knutson ideals behave \lq\lq well\rq \rq with respect to sums. This fact makes computations of Gr\"obner bases easier in many cases. Hence, Knutson ideals also provide a useful tool to solve problems in applied algebra. \par 
For all these reasons, these ideals are objects of particular interest in computational algebra, combinatorial commutative algebra, and tight closure theory. More details about the properties of Knutson ideals in any characteristic can be found in \cite{Se1}. These properties were first proved by Knutson \cite{Kn} in the case $\mathbb{K}=\mathbb{Z}/p\mathbb{Z}$. 

\begin{oss}
 It is useful to point out that since every ideal of $\mathcal{C}_f$ is radical, the second condition in Definition \ref{K.I.} can be replaced by the following:
 \begin{itemize}
 \item[$2^\prime .$] If $I \in \mathcal{C}_f$ then $\mathcal{P} \in \mathcal{C}_f$ for every $\mathcal{P} \in \Min(I)$.\\
 \end{itemize}
 \end{oss}
 
 It has already been shown that some interesting classes of ideals are Knutson ideals; for instance  determinantal ideals of Hankel matrices \cite{Se1}  and generic matrices \cite{Se2}. In this paper we prove that also binomial edge ideals of weakly closed graphs are Knutson ideals. Actually, we prove a stronger result: a graph is weakly closed if and only if its binomial edge ideal is a Knutson ideal.\\

In order to prove the main theorem of this paper, we first need to specify the polynomial $f$ defining the family of Knutson ideals we are considering. Since binomial edge ideals are a generalization of the ideal of 2-minors of a generic matrix of size $2 \times n$, the choice of $f$ is going to be the standard one for generic matrices of any size (see \cite[Theorem 2.1]{Se2}), as explained  below. \\

Given a graph $G$ on $n$ vertices and $X_{n}$ the generic matrix of size $2 \times n$ 

\[ X_{n}=
\begin{bmatrix}
    x_1       & x_2 & x_3 & \dots & x_n \\
     y_1       & y_2 & y_3 & \dots & y_n
\end{bmatrix}
\]

we define 
$$f=y_1  f_{12}  f_{23} \cdots f_{n-1 n}  x_n \in R.$$

In other words $f$ is the product of all the minors of $X$ whose diagonals are the diagonals of $X$. Thus, if we equip $R$ with a diagonal term order, we get

$$\lt (f)= \prod \limits_{i=1}^{n} x_i y_i.$$

We can then construct the Knutson family of ideals associated with this $f$. This choice of $f$ allows us to apply some known results from \cite{Se2}. In particular, in the next section we make use of the following lemma.

%For any $1 \leq a <b \leq s$  we denote by $X_{[a,b]}$ the submatrix of $X$ with column indices $a, a+1,\ldots, j$. Simmetrically, for any $1 \leq c<d \leq r$ we write $X^{[c,d]}$ to denote the submtarix of $X$ with row indices $c, c+1, \ldots, d$. 

\begin{lem} \cite[Theorem 2.1]{Se2}\label{lemSe2}
 Let $X=X_n$ and $f$ be as in the previous discussion. Denote by $X_{[a,b]}$ the submatrix of $X$ with adjacent columns from $a$ to $b$. 
%$$f= \prod_{k=0}^{m-2} \left(\det X_{[1,k+1]}^{[m-k,m]} \cdot \det X_{[n-k,n]}^{[1,k+1]} \right)\cdot \prod_{k=1}^{n-m+1} \left(\det X_{[k,m+k-1]} \right)$$  in $S=\mathbb{K}[x_{ij} \mid 1 \leq i \leq m, 1 \leq j \leq n]$.
 Then $$I_{t}(X_{[a,b]}) \in \mathcal{C}_f$$ for every $1 \leq a \leq b \leq n$ and  $t \in \{1,2\}$ .
\end{lem}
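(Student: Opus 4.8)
The plan is to produce every ideal $I_t(X_{[a,b]})$ inside $\mathcal{C}_f$ by building up from $(f)$, using only closure under passing to minimal primes (condition $2'$), sums, and intersections. First, since $f=y_1 f_{12}\cdots f_{n-1,n}x_n$ is a product of pairwise non-associate irreducibles (the variables $y_1,x_n$ and the irreducible binomials $f_{j,j+1}$), the ideal $(f)$ is radical with $\Min\big((f)\big)=\{(y_1),(f_{12}),\ldots,(f_{n-1,n}),(x_n)\}$; hence $(y_1),(x_n)$ and all $(f_{j,j+1})$ lie in $\mathcal{C}_f$.

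For the case $t=1$, I would first show $(x_i,y_i)\in\mathcal{C}_f$ for every $i\in[n]$. For $i=1$ one has $(y_1)+(f_{12})=(y_1,x_1y_2)\in\mathcal{C}_f$, whose minimal primes are $(x_1,y_1)$ and $(y_1,y_2)$; symmetrically $(x_n)+(f_{n-1,n})=(x_n,x_{n-1}y_n)$ has $(x_n,y_n)$ among its minimal primes. For $1<i<n$, the ideal $(f_{i-1,i})+(f_{i,i+1})\in\mathcal{C}_f$ is the binomial edge ideal of the path $i-1,i,i+1$, which by \cite[Theorem 3.2]{HHHKR} equals $I_2(X_{[i-1,i+1]})\cap(x_i,y_i)$; passing to minimal primes gives $(x_i,y_i)\in\mathcal{C}_f$. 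Since $I_1(X_{[a,b]})=\sum_{i=a}^{b}(x_i,y_i)$, closure under sums then yields $I_1(X_{[a,b]})\in\mathcal{C}_f$.

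For the case $t=2$, I would induct on $b-a$. When $b-a=1$, $I_2(X_{[a,a+1]})=(f_{a,a+1})\in\mathcal{C}_f$ by the first step. When $b-a\ge 2$ and $I_2(X_{[a,b-1]})\in\mathcal{C}_f$, set $I:=I_2(X_{[a,b-1]})+(f_{b-1,b})\in\mathcal{C}_f$. This $I$ is the binomial edge ideal of the graph $G$ obtained from the complete graph on $\{a,\ldots,b-1\}$ by attaching a pendant vertex $b$ at $b-1$, and by the minimal-prime description of \cite[Theorem 3.2]{HHHKR} one obtains
$$I \;=\; I_2(X_{[a,b]})\ \cap\ \big((x_{b-1},y_{b-1})+I_2(X_{[a,b-2]})\big)$$
(with the convention $I_2(X_{[a,a]})=(0)$ when $b-a=2$). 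Since these two primes are incomparable, $I_2(X_{[a,b]})$ is a minimal prime of $I$ and hence lies in $\mathcal{C}_f$, closing the induction. Equivalently one can write $I_2(X_{[a,b]})=I:(x_{b-1},y_{b-1})$ and invoke closure under colons.

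I expect the main obstacle to be the determination of $\Min(J_G)$ for $G$ a complete graph with one pendant vertex: one must check, via the cut-set criterion of \cite[Theorem 3.2]{HHHKR}, that no admissible cut set other than $\emptyset$ and $\{b-1\}$ exists (so that there are no further, possibly incomparable, minimal primes), and that the two resulting primes are incomparable — after which radicality of binomial edge ideals \cite{HHHKR} forces $J_G$ to be exactly their intersection. Everything else is routine bookkeeping with the three operations defining $\mathcal{C}_f$.
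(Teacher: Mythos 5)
Your proof is correct, but note that the paper itself gives no proof of this lemma: it is imported wholesale from \cite[Theorem 2.1]{Se2}, where it is established for an arbitrary $m\times n$ generic matrix and minors of any size. Your argument is therefore a genuinely different, self-contained route specialized to the $2\times n$ case: you realize $(x_i,y_i)$ and $I_2(X_{[a,b]})$ as minimal primes of explicit binomial edge ideals assembled by sums from the factors of $f$ (the path $i-1,i,i+1$ for the former, the complete graph on $\{a,\ldots,b-1\}$ with a pendant vertex $b$ for the latter), and you identify their minimal primes via the cut-set description of \cite[Theorem 3.2]{HHHKR}. The details check out: $(y_1,f_{12})=(y_1,x_1y_2)$ indeed has $(x_1,y_1)$ among its minimal primes; the only cut sets of the path on three vertices are $\emptyset$ and the middle vertex; and for the complete-graph-plus-pendant graph the only cut sets are $\emptyset$ and $\{b-1\}$, since any $S$ containing another vertex $v$ fails $c(S\setminus\{v\})<c(S)$, so the two claimed primes are exactly $\Min(I)$ and are incomparable (your colon variant $I:(x_{b-1},y_{b-1})=I_2(X_{[a,b]})$ is equally valid and uses condition 2 directly). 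There is no circularity, as \cite[Theorem 3.2]{HHHKR} is independent of the Knutson machinery. What your approach buys is an elementary proof relying only on primary decompositions of binomial edge ideals; what it does not buy is the general $m\times n$, arbitrary-$t$ version, which the paper actually needs later when treating generalized binomial edge ideals (Proposition \ref{wc-cf-gen}). The only loose end is the degenerate case $a=b$, $t=2$, where the ideal is zero; this is vacuous for every use of the lemma in the paper.
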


More generally, this result can be stated for any generic matrix of size $m \times n$ and for any size of the minors: given a generic matrix, all determinantal ideals on adjacent columns (or adjacent rows) are Knutson ideals for the standard choice of $f$ (see \cite[Theorem 2.1]{Se2} for more details).

\section{(Generalized) binomial edge ideals of weakly closed graphs}
 
We start off by proving that the binomial edge ideal attached to a weakly closed graph is a Knutson ideal.

\begin{prop}\label{wc-cf} Let $G$ be a weakly closed graph on the vertex set $\{1,\ldots,n\}$ and let $f= y_1  f_{12} f_{23}\cdots f_{n-1 n}  x_n \in R$. Then (there exists a labeling of the vertices such that) $$J_G \in \mathcal{C}_f.$$
In particular, if $\mathbb{K}$ has positive characteristic then $R/J_G$ is $F$-pure.
\end{prop}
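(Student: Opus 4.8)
The plan is to build $J_G$ starting from the ideal $(f)$ using only the Knutson operations (colon, sum, intersection) together with Lemma~\ref{lemSe2}, which already tells us that every $I_t(X_{[a,b]})$ for adjacent columns $a,\dots,b$ and $t\in\{1,2\}$ lies in $\mathcal C_f$. Fix a weakly closed labeling of the vertices of $G$. The key combinatorial idea is that a weakly closed labeling controls how the edges through a given vertex $j$ distribute to its left and to its right: if $\{i,k\}\in E(G)$ with $i<j<k$ then $\{i,j\}$ or $\{j,k\}$ is also an edge. I would use this to decompose $J_G$ according to a chosen vertex, peeling off the graph one vertex at a time, and at each step expressing the pieces in terms of determinantal ideals on \emph{adjacent} columns --- which are in $\mathcal C_f$ by Lemma~\ref{lemSe2} --- glued together by sums.

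Concretely, first I would handle the base cases: for $n\le 2$ the ideal $J_G$ is either $0$, or $(f_{12})=I_2(X_{[1,2]})\in\mathcal C_f$, so there is nothing to do. For the inductive step, let $j$ be a suitable vertex (I expect the right choice is the largest vertex, or more precisely a vertex that is ``simplicial on one side'' with respect to the weakly closed order, whose existence is guaranteed by the weakly-closed condition). Consider the neighbors of $j$; the weakly closed hypothesis forces the neighbors of $j$ that are $<j$ to form an interval $\{a,a+1,\dots,j-1\}$ adjacent to $j$ whenever a ``long'' edge jumps over $j$, and similarly on the right. I would then write $J_G$ as a sum (and possibly an intersection) of: the binomial edge ideal $J_{G'}$ of the graph $G'$ obtained by deleting $j$ (still weakly closed in the inherited order, so in $\mathcal C_f$ by induction), together with a determinantal ideal $I_2(X_{[a,j]})$ or $I_2(X_{[j,b]})$ on the adjacent-column block spanned by $j$ and its one-sided neighbors, which is in $\mathcal C_f$ by Lemma~\ref{lemSe2}. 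Since $\mathcal C_f$ is closed under sums and intersections, this would give $J_G\in\mathcal C_f$. The ``in particular'' clause about $F$-purity is then immediate: every ideal of $\mathcal C_f$ is compatibly split with respect to the splitting defined by $f$ (whose initial term $\prod x_iy_i$ is squarefree of full degree $2n$), hence defines an $F$-pure quotient in positive characteristic.

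The main obstacle I anticipate is the bookkeeping in the inductive step: verifying that the neighbors of the chosen vertex $j$ really do occupy an adjacent block of columns, and that the ideal-theoretic identity ``$J_G = J_{G\setminus j} + I_2(\text{adjacent block})$'' (or the correct variant involving an intersection, or a colon to cut down to the right primes) actually holds on the nose rather than just up to radical. One has to be careful that adding $I_2(X_{[a,j]})$ does not introduce the minors corresponding to non-edges $\{i,j\}$ with $a\le i<j$; so it may be necessary instead to intersect $J_{G\setminus j}$-type ideals with determinantal ideals, or to take colons by suitable monomials or linear forms (the $x_i$, $y_i$) to kill the unwanted components --- exactly the kind of manipulation allowed in $\mathcal C_f$. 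Getting the precise combination right, and checking it is compatible with the weakly closed order so that the induction goes through, is where the real work lies; everything else is a direct application of the closure properties in Definition~\ref{K.I.} and of Lemma~\ref{lemSe2}.
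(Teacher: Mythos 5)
There is a genuine gap. Your argument hinges on an inductive decomposition ``$J_G = J_{G\setminus j} + I_2(\text{adjacent block})$'' (or some variant with intersections/colons), but the combinatorial fact you invoke to justify it is false: in a weakly closed labeling the neighbors of a vertex $j$ lying below $j$ need \emph{not} form an interval $\{a,\ldots,j-1\}$. For instance, the graph on $\{1,2,3\}$ with edges $\{1,2\}$ and $\{1,3\}$ is weakly closed, yet the unique left-neighbor of $3$ is $1$, so there is no adjacent-column block $X_{[a,3]}$ whose $2$-minors are exactly the generators of $J_G$ involving column $3$; and $I_2(X_{[1,3]})$ contains $f_{23}\notin J_G$. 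You flag this yourself and suggest repairing it with intersections or colons, but you never exhibit the repair --- and that repair is the entire content of the proof. As written, the inductive step is an unproven (and, in the form stated, incorrect) identity, so the argument does not go through.

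The paper avoids generators altogether and works with the primary decomposition $J_G=\bigcap_{S\subseteq[n]}P_S$ of Herzog et al., where $P_S=\bigl(\bigcup_{i\in S}\{x_i,y_i\}\bigr)+J_{\widetilde G_1}+\cdots+J_{\widetilde G_{c(S)}}$ and the $\widetilde G_i$ are the completions of the connected components of $G_{[n]\setminus S}$. Weak closedness is used exactly once, to show that any vertex $l$ sitting in a ``gap'' between two vertices of a component $G_i$ must belong to $S$ (otherwise $l$ would be adjacent to a vertex of $G_i$ and hence lie in $G_i$); consequently one may fill the gaps and rewrite each $J_{\widetilde G_i}$ as $I_2(X_{[a_i,b_i]})$ on \emph{adjacent} columns without changing $P_S$. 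Each $P_S$ is then a sum of ideals covered by Lemma~\ref{lemSe2}, hence lies in $\mathcal C_f$, and $J_G\in\mathcal C_f$ by closure under intersection. If you want to salvage your approach, I recommend redirecting your induction to the minimal primes rather than to $J_G$ itself; on generators, the weakly closed condition simply does not give you adjacent blocks. Your final paragraph on $F$-purity is correct and matches the paper.
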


The strategy to prove Proposition \ref{wc-cf} is to write each of the minimal primes of the binomial edge ideal $J_G$ as a sum of determinantal ideals on adjacent columns, so that we can apply Lemma \ref{lemSe2}.

\begin{proof}[Proof of Proposition \ref{wc-cf}]
For each subset $S\subset [n]$ and $T= [n] \setminus S$, define $G_1, \ldots, G_{c(S)}$ to be the connected components of $G_T$ (i.e. the restriction of $G$ to $T$) and let $\widetilde{G}_1, \ldots,\widetilde{G}_{c(S)}$ be the corresponding complete graphs on their vertices. Set 
$$P_S:= \left( \bigcup_{i \in S} \{x_i,y_i\}\right)+ J_{\widetilde{G}_1}+\ldots+ J_{\widetilde{G}_{c(S)}} .$$ \par
$P_S$ is a prime ideal and it has been shown (see \cite{HHHKR}) that the primary decomposition of the binomial edge ideal associated with $G$ is given by
$$J_G= \bigcap_{S \subseteq [n]} P_S.$$\par
If we prove that $P_S$ is a Knutson ideal for each $S$, we get that $J_G \in \mathcal{C}_f$.\par
First of all, notice that by Lemma \ref{lemSe2}, we know that $\left( x_s,y_s \right) \in \mathcal{C}_f$ for every $s \in S$, because it is the ideal  of $1$-minors on column $s$. So their sum $\left( \bigcup_{i \in S} \{x_i,y_i\}\right)$ is also in $\mathcal{C}_f$. Unfortunately, the lemma does not apply to $J_{\widetilde{G}_{i}}$ because the vertices of $G_i$ might not be consecutive. Thus, we need to reduce to the case where each of the $J_{\widetilde{G}_{i}}$'s is an ideal of minors on adjacent columns of $X$ (equivalently, it is the binomial edge ideal of a complete graph on consecutive vertices) so that we can apply Lemma \ref{lemSe2}.\par 
 For this purpose, fix $i \in \{1, \ldots,c(S)\}$ and let $V(G_i)=V(\widetilde{G}_{i}):= \left\lbrace j_1, \ldots, j_{t_i}\right\rbrace$.\par
If $j_{k+1}=j_{k}+1$ for every $k=1, \ldots, t_i-1$, that is, if the vertices of $G_i$ are consecutive, then $J_{\widetilde{G}_{i}}=I_2 (X_{[j_1,j_{t_i}]}) \in \mathcal{C}_f$ by Lemma \ref{lemSe2}. \par 
Assume instead that $j_k-j_{k-1} >1$ for some $k$ and let $l$ be a vertex of $G$ such that $j_{k-1}<l<j_k$. Since $G_i$ is connected, there exist $m,n \in V(G_i)$ such that $m<l<n$ and $\{m,n\} \in E(G_i)\subset E(G)$. This implies that either $\{m,l\} \in E(G)$ or $\{l,n\} \in E(G)$, because $G$ is weakly closed. Assume that $l \notin S$. Then $l$ must be in the same connected component of $m$ and $n$. This would imply that $l \in  V(G_i)$, a contradiction. In other words, we have just shown that if there is a \lq\lq gap" between the vertices of the connected component $G_i$, every vertex $l$ in this gap must be in $S$. But then, we can add all these missing vertices to $V(G_i)$ and replace  $J_{\widetilde{G}_i}$ with $\overline{J}_{\widetilde{G}_i}:=I_2(X_{[j_1, j_{t_i}]})$ without changing our prime ideal $P_S$. Indeed, in doing so, we are adding some $2$-minors to our original prime ideal, but these minors were already contained in $P_S$, since they are contained in the ideal $\left( \bigcup_{i \in S} \{x_i,y_i\}\right)$.\par 
In conclusion, if we replace each of the $J_{\widetilde{G}_i}$'s with its corresponding $\overline{J}_{\widetilde{G}_i}:=I_2 ( X_{[a_i, b_i]})$, we get 
$$P_S= \left( \bigcup_{s \in S} \{x_s,y_s\}, J_{\widetilde{G}_1}, \ldots, J_{\widetilde{G}_{c(S)}} \right)= \left( \bigcup_{s \in S} \{x_s,y_s\}, I_2 ( X_{[a_1, b_1]}), \ldots, I_2 ( X_{[a_{c(S)}, b_{c(S)}]}) \right).$$
Finally, we can apply Lemma \ref{lemSe2} and we obtain that $P_S \in \mathcal{C}_f$ for every $S \subset [n]$, because it is the sum of ideals of minors on adjacent columns. This completes the proof.
\end{proof}

Using the same notation as in the proof of Proposition \ref{wc-cf}, we set 
$$\overline{P}_S:= \left( \bigcup_{s \in S} \{x_s,y_s\}, \overline{J}_{\widetilde{G}_1}, \ldots, \overline{J}_{\widetilde{G}_{c(S)}} \right)$$ 
where each of the $\overline{J}_{\widetilde{G}_i}$'s is an ideal of 2-minors on adjacent columns. It has just been shown that if $G$ is a weakly-closed graph, each of the minimal primes of its binomial edge ideal is a sum of determinantal ideals on adjacent columns, that is $P_S=\overline{P}_S$ for every minimal prime of $J_G$. Actually, it turns out that the only binomial edge ideals whose minimal primes can be written in this way are those arising from weakly closed graphs.\\

 We have the following characterization of weakly closed graphs in terms of minimal primes of their binomial edge ideals.

\begin{prop}\label{psps}Let $G$ be a connected graph and let $J_G= \bigcap_{S\subseteq [n]} P_S$ be the primary decomposition of the binomial edge ideal associated with $G$. The following are equivalent
\begin{itemize}
\item[(1)]$G$ is weakly closed. 
\item[(2)]There exists a labeling of the vertices of $G$ such that $P_S=\overline{P}_S$ for every minimal prime of $J_G$.
\end{itemize}
\end{prop}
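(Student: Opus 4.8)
The direction $(1)\Rightarrow(2)$ is exactly what was established in the proof of Proposition~\ref{wc-cf}: when $G$ is weakly closed, any gap between vertices of a connected component $G_i$ of $G_T$ is forced to lie in $S$, so enlarging $V(\widetilde G_i)$ to a consecutive interval only adds $2$-minors already contained in $\big(\bigcup_{s\in S}\{x_s,y_s\}\big)$, whence $P_S=\overline P_S$. So the substance is the converse $(2)\Rightarrow(1)$, which I plan to prove by contraposition: assuming $G$ is \emph{not} weakly closed for a given labeling, I will exhibit a specific $S$ whose minimal prime $P_S$ cannot equal $\overline P_S$, no matter the labeling.

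Here is the strategy in detail. Suppose the labeling is not a weakly closed one: there are $i<j<k$ with $\{i,k\}\in E(G)$ but $\{i,j\}\notin E(G)$ and $\{j,k\}\notin E(G)$. The natural candidate is to take $S$ to be a set that isolates the obstruction — roughly, $T=[n]\setminus S$ should contain $i$ and $k$ (in the same connected component of $G_T$, via the edge $\{i,k\}$ or a path through $T$) while $j\notin T$, i.e. $j\in S$. Concretely I would first pass to an induced subgraph: restrict attention to a minimal set of vertices witnessing that $\{i,k\}$ lies in a connected component of some $G_T$ with $j$ sitting "in the gap" but not in that component. Then $\overline P_S$ contains the $2$-minor $f_{ik}'$... more importantly $\overline{J}_{\widetilde G_i}=I_2(X_{[a_i,b_i]})$ contains \emph{all} $2$-minors on the interval $[a_i,b_i]\ni j$, in particular minors $f_{ij}$ and $f_{jk}$ involving the variables $x_j,y_j$. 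But $x_j,y_j$ do not appear in $P_S$ at all: $j\notin S$ contributes no linear form, and $j$ is not a vertex of any $\widetilde G_\ell$ since $j\notin T$. Comparing degrees/support, $f_{ij}\notin P_S$ while $f_{ij}\in\overline P_S$, so $P_S\subsetneq\overline P_S$, contradicting (2) for this labeling. Since the labeling was arbitrary, and being weakly closed only requires \emph{some} good labeling, the key point is to make this argument robust: I must choose the witnessing configuration $(i,j,k)$ and the set $S$ in a way that works against \emph{every} labeling, or — more cleanly — observe that if $G$ is not weakly closed then for \emph{each} labeling there is such a bad triple, run the argument labeling-by-labeling, and conclude no labeling can satisfy (2).

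To make the support comparison rigorous I would use the explicit description $P_S=\big(\bigcup_{s\in S}\{x_s,y_s\}\big)+J_{\widetilde G_1}+\dots+J_{\widetilde G_{c(S)}}$ together with the primality of $P_S$: the variables $x_j,y_j$ for $j\notin S$ that are not vertices of the relevant complete graph simply do not occur among the generators on that component, and a linear-algebra / Gröbner argument (or direct inspection, since $P_S$ is a prime generated by variables and generic $2$-minors on disjoint vertex sets) shows $f_{ij}\notin P_S$. The choice of $S$: take $T$ to be the two vertices $\{i,k\}$ together with a $T$-path connecting them that avoids $j$ if one exists; if no such path exists but $\{i,k\}$ is itself an edge, just take $T=\{i,k\}$, $S=[n]\setminus\{i,k\}$, so $c(S)=1$, $\widetilde G_1=K_{\{i,k\}}$, and $\overline{\widetilde G}_1$ is the complete graph on the interval from $i$ to $k$, which contains $j$.

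The main obstacle I anticipate is the quantifier over labelings: condition (2) asks only that \emph{there exists} a good labeling, so to refute it I cannot fix one bad triple once and for all — I must argue that \emph{no} labeling is good. The clean way around this is to note that "$G$ is not weakly closed" means precisely that \emph{every} labeling fails the weakly closed condition, so for every labeling I get \emph{some} bad triple $(i,j,k)$ depending on that labeling, and then the $P_S\subsetneq\overline P_S$ construction above, applied with that triple, shows that particular labeling violates (2). Hence (2)$\Rightarrow$(1). A secondary technical point is ensuring that after adding gap-filling minors to pass from $J_{\widetilde G_i}$ to $\overline J_{\widetilde G_i}$ one indeed lands in $\overline P_S$ and not accidentally back in $P_S$ — this is exactly the support argument on $x_j,y_j$, and I would isolate it as a small lemma.
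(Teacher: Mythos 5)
Your direction $(1)\Rightarrow(2)$ and your handling of the quantifier over labelings are fine and agree with the paper. The genuine gap is in the construction of the witnessing set $S$ for $(2)\Rightarrow(1)$. You first declare $j\notin T$, i.e.\ $j\in S$, and later assert that ``$x_j,y_j$ do not appear in $P_S$ at all: $j\notin S$ contributes no linear form, and $j$ is not a vertex of any $\widetilde G_\ell$ since $j\notin T$''. Since $S$ and $T=[n]\setminus S$ partition $[n]$, you cannot have both $j\notin S$ and $j\notin T$; and with your actual choice $j\in S$ the argument collapses: then $x_j,y_j$ are among the linear generators of $P_S$, so every extra minor $f_{uj}=x_uy_j-x_jy_u$ gained by passing to $\overline J_{\widetilde G_i}=I_2(X_{[a_i,b_i]})$ already lies in $(x_j,y_j)\subseteq P_S$. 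This is exactly the mechanism that makes $(1)\Rightarrow(2)$ work, so your candidate $S$ yields $P_S=\overline P_S$ rather than a counterexample. Your concrete fallback $S=[n]\setminus\{i,k\}$ fails for the same reason (all gap vertices lie in $S$), and moreover such a $P_S$ is in general not a minimal prime of $J_G$, so condition (2) would say nothing about it.

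The idea you are missing is that the obstruction vertex must remain in $T$ but land in a \emph{different} connected component of $G_T$ from the one containing the edge. The paper takes a bad triple $k<l<m$ with $\{k,m\}\in E(G)$ and $\{k,l\},\{l,m\}\notin E(G)$, and puts into $S$ the first vertices of all paths joining $k$ to $l$ and $m$ to $l$; since $l$ is adjacent to neither $k$ nor $m$, $l\notin S$, so $l\in T$, while $\{k,m\}$ and $l$ now lie in distinct components $G_1$ and $G_2$ of $G_T$. Then $\overline J_{\widetilde G_1}=I_2(X_{[a_1,b_1]})$ with $a_1\le k<l<m\le b_1$ contains the mixed minor $f_{kl}$, whereas $P_S$ is generated by the variables indexed by $S$ together with minors supported on single components, so $f_{kl}\notin P_S$ and $P_S\neq\overline P_S$. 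Note also that the relevant point is not that $x_l,y_l$ are absent from $P_S$ (they may well occur in $J_{\widetilde G_2}$), but that no generator of $P_S$ mixes the columns of $G_1$ with the column $l$; your ``support argument'' lemma should be formulated for this configuration, and you should additionally verify that the chosen $P_S$ is a minimal prime of $J_G$.
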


\begin{proof}
$(1) \Rightarrow (2)$ has already been proved. \par 
$(2) \Rightarrow (1)$ Let us assume by contradiction that $G$ is not weakly closed. We will show that for every labeling of the vertices it is possible to find $S\subseteq [n]$ such that $P_S \neq \overline{P}_S$ and we are done.\par
Since $G$ is not weakly closed, for each labeling of the vertices there exist $k,l,m \in V(G)$ with $k<l<m$ such that $\{k,m\}\in E(G)$ and $\{k,l\},\{l,m\} \notin E(G)$. Nonetheless, there are a finite number of paths of length $\geq 2$ connecting $k$ to $l$ in $G \setminus \{m\}$ and $m$ to $l$ in $G \setminus \{l\}$. We will denote them as follows:

\begin{align*}
p_1: \quad k,p_{11}&,p_{12},\ldots,l\\
p_2: \quad k,p_{21}&,p_{22},\ldots,l\\
 & \vdots\\
p_r: \quad k,p_{r1}&,p_{r2},\ldots,l\\
q_1:\quad m,q_{11}&,q_{12},\ldots,l\\
q_2: \quad m,q_{21}&,q_{22},\ldots,l\\
&\vdots \\
q_t: \quad m,q_{t1}&,q_{t2},\ldots,l.
\end{align*}  
Now, take $S$ to be the set of the first vertices of the previous paths, that is
 $$S=\{ p_{11},p_{21}, \ldots, p_{r1},q_{11},q_{21}, \ldots,q_{t1}\}.$$ Then $P_S$ is a minimal prime of $J_G$ and we claim that $P_S \neq \overline{P}_S$.\par 
 By definition of $S$, $\{k,m\}$ and $l$ do not belong to the same connected component of  $G_{[n] \setminus S}$. Without loss of generality we can assume that $G_1$ is the connected component of $\{k,m\}$ and $G_2$ is the connected component of $l$. Observe that $x_l,y_l \notin \bigcup_{s \in S} \{x_s,y_s\}$. Thus, it is straightforward to see that 
 $$\left( \bigcup_{s \in S} \{x_s,y_s\}, J_{\widetilde{G}_1}, J_{\widetilde{G}_2} \right) \neq \left( \bigcup_{s \in S} \{x_s,y_s\}, \overline{J}_{\widetilde{G}_1}, \overline{J}_{\widetilde{G}_{2}} \right).$$
 This shows that $P_S \neq \overline{P}_S$.
\end{proof}

The proof of previous results relies on two main ingredients: the primary decomposition of binomial edge ideals, and Lemma \ref{lemSe2} on determinantal ideals on adjacent columns. Actually, these two facts hold in a more general setting. Thus Propositions \ref{wc-cf} and \ref{psps} extend to  a larger class of ideals, called \emph{generalized binomial edge ideals}. These ideals were introuced by Rauh \cite{Ra} as a generalization of binomial edge ideals. \par 
Let $G$ a graph on $n$ vertices and let $X$ be the generic matrix of size $m \times n$ with entries $x_{ij}$. We can attach to $G$ an ideal in the polynomial ring $\mathbb{K}[X]=\mathbb{K}[x_{ij} \mid  1 \leq i \leq m, 1 \leq j \leq n]$ defined as follows, 
\begin{equation*}
\mathfrak{J}_G= \sum \limits_{\{i,j\} \in E(G)} I_2 \begin{pmatrix}
    x_{1i}      & x_{1j} \\
    x_{2i} & x_{2j}\\
    \vdots & \vdots\\
     x_{mi}       & x_{mj}
\end{pmatrix} .
\end{equation*}
$\mathfrak{J}_G$ is the \emph{generalized binomial edge ideal of }$G$. If $m=2$ this definition boils down to the usual definition of binomial edge ideal.\par
In \cite{Ra} the author proves that generalized binomial edge ideals are radical and the primary decomposition is analogous to that of classical binomial edge ideals. Furthermore, as already said, Lemma \ref{lemSe2} holds for any generic matrix of size $m \times n$ and for any size of the minors (see \cite[Theorem 2.1]{Se2}). Hence, if $G$ is a weakly closed graph, the proofs of Proposition \ref{wc-cf} and \ref{psps} easily generalize to $\mathfrak{J}_G$.

\begin{prop}\label{wc-cf-gen} Let $G$ be a weakly closed graph on $[n]$. Define $f \in \mathbb{K}[X]$ to be the product of all the minors of $X$ corresponding to all the diagonals of $X$ (see \cite[Theorem 2.1]{Se2} for further details). Then $\mathfrak{J}_G \in \mathcal{C}_f$. \par 
In particular, if $\mathbb{K}$ has positive characteristic, then $\mathbb{K}[X]/\mathfrak{J}_G$ is $F$-pure.
\end{prop}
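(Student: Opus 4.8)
The plan is to imitate the proof of Proposition \ref{wc-cf} almost verbatim, replacing the $2\times n$ matrix $X$ by the $m\times n$ generic matrix and the ideal of $2$-minors on a pair of columns by the ideal $I_2$ of all $2\times 2$ minors of the corresponding $m\times 2$ submatrix. First I would recall, from Rauh \cite{Ra}, the primary decomposition of the generalized binomial edge ideal: for $S\subseteq[n]$ with connected components $G_1,\dots,G_{c(S)}$ of $G_{[n]\setminus S}$ and $\widetilde G_i$ the complete graph on $V(G_i)$, one has $\mathfrak{J}_G=\bigcap_{S\subseteq[n]}\mathfrak{P}_S$ where $\mathfrak{P}_S=\left(\bigcup_{s\in S}\{x_{1s},\dots,x_{ms}\}\right)+\mathfrak{J}_{\widetilde G_1}+\cdots+\mathfrak{J}_{\widetilde G_{c(S)}}$, and each $\mathfrak{P}_S$ is prime. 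Since $\mathcal{C}_f$ is closed under sums and intersections, it suffices to show $\mathfrak{P}_S\in\mathcal{C}_f$ for every $S$.

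Next I would invoke the general form of Lemma \ref{lemSe2} quoted in the excerpt (``given a generic matrix, all determinantal ideals on adjacent columns are Knutson ideals for the standard choice of $f$''): for the $m\times n$ generic matrix $X$ and $f$ the product of all maximal-diagonal minors, $I_t(X_{[a,b]})\in\mathcal{C}_f$ for every $1\le a\le b\le n$ and every admissible $t$. In particular the ideal of variables in a single column $s$, namely $(x_{1s},\dots,x_{ms})=I_1(X_{[s,s]})$, lies in $\mathcal{C}_f$, so the sum over $s\in S$ does too; and $I_2(X_{[a,b]})\in\mathcal{C}_f$ for any consecutive block $[a,b]$. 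The one subtlety relative to the $2\times n$ case is that $\mathfrak{J}_{\widetilde G_i}$ is not merely the ideal of maximal minors on the columns of $\widetilde G_i$ but the ideal generated by all $2$-minors of all pairs of those columns; however, for a complete graph on a consecutive block $[a,b]$ this is exactly $I_2(X_{[a,b]})$, so the identification still works.

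The heart of the argument — and the step I expect to require the most care — is the ``gap-filling'' combinatorial observation, which I would carry over unchanged: if $G$ is weakly closed and $G_i$ is a connected component of $G_{[n]\setminus S}$ with a gap, i.e. $j_{k-1}<l<j_k$ for consecutive vertices $j_{k-1},j_k\in V(G_i)$ and some $l\notin V(G_i)$, then connectivity of $G_i$ gives $m,n\in V(G_i)$ with $m<l<n$ and $\{m,n\}\in E(G)$, weak closedness gives $\{m,l\}\in E(G)$ or $\{l,n\}\in E(G)$, and hence $l\in S$ (otherwise $l$ would lie in the same component as $m,n$, contradicting $l\notin V(G_i)$). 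Consequently every vertex in a gap belongs to $S$, so replacing $\mathfrak{J}_{\widetilde G_i}$ by $\overline{\mathfrak{J}}_{\widetilde G_i}:=I_2(X_{[j_1,j_{t_i}]})$ only adds $2$-minors already contained in $\left(\bigcup_{s\in S}\{x_{1s},\dots,x_{ms}\}\right)\subseteq\mathfrak{P}_S$ (any $2$-minor of columns $c,c'$ of $X$ with $c$ or $c'$ in $S$ lies in that ideal), leaving $\mathfrak{P}_S$ unchanged. After this replacement $\mathfrak{P}_S$ is a sum of determinantal ideals on disjoint consecutive blocks of columns together with single-column variable ideals, so Lemma \ref{lemSe2} applies and $\mathfrak{P}_S\in\mathcal{C}_f$. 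Therefore $\mathfrak{J}_G=\bigcap_S\mathfrak{P}_S\in\mathcal{C}_f$; the $F$-purity in positive characteristic then follows from the general fact, recalled in Section 2, that Knutson ideals are compatibly split and hence define $F$-pure quotients.
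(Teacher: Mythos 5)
Your proof is correct and follows exactly the route the paper intends: the paper gives no separate argument for Proposition \ref{wc-cf-gen}, stating only that Rauh's primary decomposition and the general $m\times n$ form of Lemma \ref{lemSe2} let the proof of Proposition \ref{wc-cf} carry over, which is precisely what you have written out (including the correct identification of $\mathfrak{J}_{\widetilde G_i}$ with $I_2(X_{[a,b]})$ on a consecutive block and the unchanged gap-filling argument).
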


\begin{prop}\label{psps-gen}Let $G$ be a connected graph and let $\mathfrak{J}_G= \bigcap_{S\subseteq [n]} \mathfrak{P}_S$ be the primary decomposition of the  generalized binomial edge ideal associated with $G$. Then $G$ is weakly closed if and only if there exists a labeling of the vertices of $G$ such that $\mathfrak{P}_S$ is a sum of determinantal ideals on adjacent columns.
\end{prop}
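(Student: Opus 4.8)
The plan is to mirror the proof of Proposition \ref{psps} almost verbatim, relying on the fact — already recalled in the paragraph preceding the statement — that Rauh's primary decomposition of $\mathfrak{J}_G$ has exactly the same combinatorial shape as the one of Herzog et al.: for each $S \subseteq [n]$ one sets $\mathfrak{P}_S = \left(\bigcup_{s \in S}\{x_{1s},\ldots,x_{ms}\}\right) + \mathfrak{J}_{\widetilde{G}_1} + \cdots + \mathfrak{J}_{\widetilde{G}_{c(S)}}$, where $\widetilde{G}_1,\ldots,\widetilde{G}_{c(S)}$ are the complete graphs on the connected components of $G_{[n]\setminus S}$, each $\mathfrak{J}_{\widetilde{G}_i}$ is the ideal of $2$-minors of the $m\times t_i$ submatrix of $X$ on the columns of that component, and $\mathfrak{J}_G = \bigcap_{S\subseteq[n]}\mathfrak{P}_S$. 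With this in hand, the analogue $\overline{\mathfrak{P}}_S$ is defined by replacing each $\mathfrak{J}_{\widetilde{G}_i}$ with the ideal of $2$-minors $\overline{\mathfrak{J}}_{\widetilde{G}_i} := I_2(X_{[a_i,b_i]})$ on the full interval of adjacent columns $[a_i,b_i]$ spanned by the component.

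For the forward direction I would repeat the argument of Proposition \ref{wc-cf}: given a weakly closed labeling, every vertex $l$ lying in a ``gap'' of a connected component $G_i$ of $G_{[n]\setminus S}$ must belong to $S$ (otherwise weak closedness forces an edge of $G$ from $l$ to that component, putting $l$ in $G_i$, a contradiction). Hence the extra $2$-minors introduced when passing from $\mathfrak{J}_{\widetilde{G}_i}$ to $\overline{\mathfrak{J}}_{\widetilde{G}_i}$ involve only columns indexed by $S$, and so already lie in $\left(\bigcup_{s\in S}\{x_{1s},\ldots,x_{ms}\}\right) \subseteq \mathfrak{P}_S$; thus $\mathfrak{P}_S = \overline{\mathfrak{P}}_S$ for every minimal prime, and each such prime is a sum of determinantal ideals on disjoint intervals of adjacent columns. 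For the converse I would again argue contrapositively: if $G$ is not weakly closed, for every labeling there are $k<l<m$ with $\{k,m\}\in E(G)$ but $\{k,l\},\{l,m\}\notin E(G)$; taking $S$ to be the set of first-after-$k$ (resp. first-after-$m$) vertices of all paths of length $\geq 2$ from $k$ to $l$ and from $m$ to $l$ exactly as in Proposition \ref{psps}, one gets that $\{k,m\}$ and $l$ lie in different components of $G_{[n]\setminus S}$, while $x_{1l},\ldots,x_{ml}\notin \bigcup_{s\in S}\{x_{1s},\ldots,x_{ms}\}$, so $\mathfrak{P}_S \neq \overline{\mathfrak{P}}_S$ and $\mathfrak{P}_S$ is not of the required form.

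The only genuinely new point — and the step I expect to require the most care — is verifying that the inequality $\mathfrak{P}_S \neq \overline{\mathfrak{P}}_S$ survives the passage from $m=2$ to general $m$. In the $2\times n$ case one argues that the $2$-minor on columns $l$ and (a suitable vertex of $G_1$) is not contained in $\mathfrak{P}_S$; for general $m$ one must exhibit a specific $2$-minor of $X$ on column $l$ and some column $c$ of the other component that does not lie in $\mathfrak{P}_S = \overline{\mathfrak{P}}_S$. A clean way is to pass to the quotient by $\bigcup_{s\in S}\{x_{ij}\}$ and by all variables on columns not in $\{l,c\}$, which kills $\mathfrak{P}_S$ down to the $2$-minor ideal of a $2\times 2$ generic matrix on columns $l,c$ of the ambient $m\times n$ matrix while not killing that same minor in $\overline{\mathfrak{P}}_S$ only if $l,c$ get merged into one interval — and here the choice of $S$ guarantees they do not. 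I would make this precise by a degree/monomial argument on the diagonal initial terms, noting that $\mathrm{in}_\prec(\mathfrak{P}_S)$ and $\mathrm{in}_\prec(\overline{\mathfrak{P}}_S)$ are squarefree monomial ideals whose generating sets differ precisely in the presence of the monomial $x_{1l}x_{2c}$ (or its analogue), which lies in one but not the other. Beyond this, everything is a routine transcription, so I would state the proof compactly, citing the proofs of Propositions \ref{wc-cf} and \ref{psps} for the parts that are identical.
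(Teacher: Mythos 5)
Your proposal matches the paper's approach exactly: the paper gives no separate proof of Proposition \ref{psps-gen}, stating only that Rauh's primary decomposition has the same combinatorial shape as in the $2\times n$ case and that Lemma \ref{lemSe2} holds for $m\times n$ generic matrices, so that the proofs of Propositions \ref{wc-cf} and \ref{psps} carry over verbatim --- which is precisely the transcription you carry out. Your additional care in exhibiting a $2$-minor witnessing $\mathfrak{P}_S \neq \overline{\mathfrak{P}}_S$ for general $m$ is sound and in fact more detailed than what the paper records.
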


\section{Characterization of Knutson binomial edge ideals}
The goal of this section is to use the characterization of weakly closed graphs found in Proposition \ref{psps} to prove the converse of Proposition \ref{wc-cf}. Thereby we find an algebraic characterization of weakly closed graphs.

\begin{thm}\label{cf-wc}
$G$ is weakly closed $\Leftrightarrow$ ($\exists$ a labeling of the vertices such that) $J_G \in \mathcal{C}_f$.
\end{thm}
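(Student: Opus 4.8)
The plan is to prove the two implications separately, exploiting the work already done. The forward direction, $G$ weakly closed $\Rightarrow J_G \in \mathcal{C}_f$, is exactly Proposition~\ref{wc-cf}, so nothing remains to be done there. The content of the theorem is therefore the converse: assuming $J_G \in \mathcal{C}_f$ for the fixed polynomial $f = y_1 f_{12} f_{23}\cdots f_{n-1\,n}\, x_n$, I must produce a labeling of the vertices of $G$ witnessing weak closedness. By Proposition~\ref{psps} it suffices to show that, for this labeling, every minimal prime of $J_G$ satisfies $P_S = \overline{P}_S$, i.e.\ each $P_S$ is a sum of determinantal ideals on \emph{adjacent} columns together with the single-column ideals $(x_s,y_s)$ for $s \in S$.

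The key input is Proposition~\ref{PrIdCf} (Proposition~\ref{introMP} in the introduction), which describes \emph{every} minimal prime $P$ of \emph{any} Knutson ideal $I \in \mathcal{C}_f$: it has the shape
$$P = \big((y_1,\ldots,y_{k-1}) + (x_u)_{U}\big) + L + \big((x_{l+1},\ldots,x_n) + (y_v)_{V}\big),$$
with $L \subset \mathbb{K}[x_k,\ldots,x_l]$ a minimal prime of the binomial edge ideal of a weakly closed graph. So the argument I would run is: take $J_G \in \mathcal{C}_f$; by the radicality remark (condition $2'$ in Definition~\ref{K.I.}) every $P_S \in \Min(J_G)$ is itself in $\mathcal{C}_f$, hence has the form above. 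Now I match this description against the combinatorial form of $P_S$ coming from \cite{HHHKR}: $P_S = \big(\bigcup_{s\in S}\{x_s,y_s\}\big) + J_{\widetilde G_1} + \cdots + J_{\widetilde G_{c(S)}}$. Comparing the two expressions, the ``pure-variable'' parts $(y_1,\ldots,y_{k-1})+(x_u)_U$ and $(x_{l+1},\ldots,x_n)+(y_v)_V$ must absorb exactly the contributions of the vertices in $S$ that lie in the initial/final blocks (note any vertex $i$ for which both $x_i$ and $y_i$ appear is forced into $S$), while the middle part $L$ — a minimal prime of a binomial edge ideal of a weakly closed graph on the \emph{consecutive} vertices $k,\ldots,l$ — must account for all the nontrivial $J_{\widetilde G_i}$'s together with the remaining elements of $S$. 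Since, by Proposition~\ref{wc-cf} applied to that weakly closed graph on $\{k,\ldots,l\}$, such an $L$ is exactly of the form $\overline{P}$ (a sum of $(x_s,y_s)$'s and $I_2$ of adjacent-column submatrices), I conclude $P_S = \overline{P}_S$. As this holds for every minimal prime, Proposition~\ref{psps} gives that $G$ is weakly closed for this labeling.

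The main obstacle is matching the abstract prime from Proposition~\ref{PrIdCf} with the specific $P_S$ from \cite{HHHKR} and, crucially, extracting the labeling. Proposition~\ref{PrIdCf} is stated for a \emph{fixed} variable order $x_1 < \cdots < x_n$, $y_1 < \cdots < y_n$, so the ``adjacency'' of columns in the conclusion is adjacency in that fixed order — which is precisely the order that must serve as the weakly-closed labeling. The delicate point is therefore to verify that the decomposition of $P_S$ produced by the Proposition never forces a nontrivial binomial minor $x_i y_j - x_j y_i$ with non-consecutive $i,j$ into a component unless one of the intermediate columns is killed (i.e.\ lies in $S$); this is exactly the obstruction that Proposition~\ref{psps} shows is equivalent to failure of weak closedness, and it is ruled out by the structural constraint that $L$ is a minimal prime of a binomial edge ideal of a \emph{weakly closed} graph on consecutive vertices. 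I would also need to handle degenerate cases — $G$ disconnected is excluded since Proposition~\ref{psps} assumes $G$ connected, and the zero-ideal possibilities for the three summands in Proposition~\ref{PrIdCf} correspond to $S$ meeting or missing the end-blocks — but these are bookkeeping rather than substance. The real work was already front-loaded into Propositions~\ref{psps} and~\ref{PrIdCf}; the theorem itself is their synthesis.
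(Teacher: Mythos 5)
Your proposal follows the paper's own proof essentially verbatim: the forward implication is Proposition~\ref{wc-cf}, and the converse combines the description of minimal primes of Knutson ideals from Proposition~\ref{PrIdCf} with the primary decomposition of \cite[Theorem 3.2]{HHHKR} to conclude $P_S=\overline{P}_S$, whence Proposition~\ref{psps} applies. The extra care you take in matching the two descriptions of $P_S$ and in noting that the fixed variable order supplies the labeling is a sound (and slightly more explicit) rendering of the same argument.
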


So far, we have the following:
%\begin{center}
% \begin{tikzcd} 
%G \text{ weakly closed} \arrow[r, Leftrightarrow] \arrow[d, Rightarrow]\arrow[dr, Rightarrow, sloped, swap]{}{\car\mathbb{K}=p>0} & P_S=\overline{P}_S,  \ \forall P_S \in \Min (J_G)\\ 
%J_G \in \mathcal{C}_f \arrow[r, Rightarrow, swap]{}{\car \mathbb{K}=p>0} & J_G \text{ $F$-pure} 
%\end{tikzcd}
% \end{center}
% 
 $$\xymatrix@=3em{
G \text{ weakly closed}\ar@{<=>}[rr]\ar@{=>}[d]\ar@{=>}[rrd]^{ \ \ \text{  Matsuda }}_{\car \mathbb{K}=p>0}&& P_S=\overline{P}_S,  \ \forall P_S \in \Min (J_G)\\
J_G \in \mathcal{C}_f\ar@{=>}[rr]_{\car \mathbb{K}=p>0}&& J_G \text{ $F$-pure}.
}$$

%Using Proposition \ref{psps} , we would like to prove that the inverse Theorem \ref{wc-cf} is still true.

To prove Theorem \ref{cf-wc} we first characterize all the minimal primes of the ideals in $\mathcal{C}_f$.
\begin{prop} \label{PrIdCf}
Let $I\in \mathcal{C}_f$ and let $P \in \Min (I)$. Then
$$P= \left(y_1,\ldots,y_{k-1}\right)+\left(x_u\right)_{U \subset \{1,\ldots, k-1\}}+L_S +\left(x_{l+1},\ldots,x_n\right)+\left(y_v\right)_{V \subset \{l+1,\ldots, n\}}$$
where $L_S=\overline{L}_S\subseteq \mathbb{K}[x_{k},y_{k},\ldots,x_{l},y_{l}]$ is a minimal prime of a weakly closed graph, and each of the three summands may possibly be the zero ideal.
\end{prop}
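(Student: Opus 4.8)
The plan is to describe $\mathcal{C}_f$ through its minimal primes and then run a structural induction on how such primes are produced. By the Remark after Definition \ref{K.I.}, an ideal lies in $\mathcal{C}_f$ exactly when it is a finite intersection of primes from the family $\mathcal{P}_f$ of all primes that occur as a minimal prime of some member of $\mathcal{C}_f$, and $\mathcal{P}_f$ is the \emph{smallest} family of primes that contains the minimal primes of $(f)$ --- the ``atoms'' $(y_1)$, $(x_n)$ and $(f_{i,i+1})$ for $1\le i\le n-1$ --- and is closed under taking minimal primes of finite sums of its members. Indeed, intersections of $\mathcal{P}_f$-primes create nothing new, a colon $P:J$ of a prime $P$ is either $P$ or the unit ideal, and --- since every ideal in $\mathcal{C}_f$ is radical --- a sum $(\bigcap_c P_c)+(\bigcap_d Q_d)$ equals $\bigcap_{c,d}\bigcap_{R\in\Min(P_c+Q_d)}R$; an $r$-fold sum then reduces to iterated pairwise sums. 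Hence it suffices to show that the set $\mathcal{N}$ of primes admitting a presentation as in the statement (the \emph{standard} primes) contains the three atoms and is closed under taking minimal primes of a sum $P_1+P_2$ of two standard primes.

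The atoms are standard: $(y_1)$ and $(x_n)$ have $L_S=0$ and one nonzero linear summand, while $(f_{i,i+1})=I_2(X_{[i,i+1]})$ is the binomial edge ideal --- hence a minimal prime --- of the graph on $[n]$ whose only edge is $\{i,i+1\}$, which is weakly closed; so $(f_{i,i+1})$ is standard with trivial linear summands and $L_S=(f_{i,i+1})$. For the inductive step, write $P_i$ ($i=1,2$) in standard form with middle block $[k_i,l_i]$ and middle prime $L_i=\overline{L}_i$, which by its very construction (see the proof of Proposition \ref{wc-cf}) has the shape $L_i=(x_s,y_s\mid s\in S_i)+\sum_c I_2(X_{[a_c,b_c]})$ with the $[a_c,b_c]\subseteq[k_i,l_i]$ pairwise disjoint and $S_i=[k_i,l_i]\setminus\bigcup_c[a_c,b_c]$. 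Put $k=\max(k_1,k_2)$ and $l=\min(l_1,l_2)$. In $P_1+P_2$ the monomial generators of the two ``bottom'' (resp.\ ``top'') parts combine to $(y_1,\dots,y_{k-1})$ together with the $x$'s indexed by $U_1\cup U_2$ (resp.\ symmetrically), and the only non-formal feature is that the longer $y$-prefix and the longer $x$-suffix \emph{invade} the shorter block's middle prime. This invasion is controlled by the following statement, proved from the explicit shape of $\overline{L}$ above: if $L$ is a minimal prime of a weakly closed graph on $[a,b]$, then every minimal prime of $L+(y_a,\dots,y_c)$ is again standard, of the form $(y_a,\dots,y_{d})+(x_u\mid u\in U')+L'$ with $d\ge c$ and $L'$ a minimal prime of a weakly closed graph on $[d+1,b]$ (and symmetrically for $L+(x_c,\dots,x_b)$). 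For a single straddling block this is the computation $\Min\bigl(I_2(X_{[a,b]})+(y_a,\dots,y_c)\bigr)=\{(y_a,\dots,y_c)+(x_a,\dots,x_c)+I_2(X_{[c+1,b]}),\ (y_a,\dots,y_b)\}$, and the general case follows because distinct blocks have disjoint supports.

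The remaining ingredient is a \emph{merging} statement: if $L_1,L_2$ are minimal primes of weakly closed graphs supported on sub-intervals of a common interval $[a,b]$, then every minimal prime of $L_1+L_2$ is a standard prime supported on $[a,b]$. Here one writes $L_1+L_2$ as a sum over (possibly overlapping) interval-supported determinantal and linear pieces and analyses its minimal primes directly, using the primary decomposition of \cite{HHHKR} and the fact that a disjoint union of cliques supported on intervals is weakly closed (which pins down exactly which cut-set primes $P_S$ are minimal): although the union of the two underlying graphs need not be weakly closed, passing to a \emph{minimal} prime $P_S$ of such a sum forces $S$ to consist of essential cut vertices, and for such $S$ the relevant connected components have pairwise disjoint interval hulls, i.e.\ $P_S=\overline{P}_S$, which is precisely the condition of Proposition \ref{psps}. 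Applying the invasion statement to the invaded block of $P_1+P_2$, then the merging statement to the two middle primes that survive on $[k,l]$, and finally collecting the monomial generators, writes each minimal prime of $P_1+P_2$ in standard form; this closes the induction, and Proposition \ref{PrIdCf} follows since $\Min(I)\subseteq\mathcal{P}_f\subseteq\mathcal{N}$ for every $I\in\mathcal{C}_f$.

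I expect the real work to lie in the merging statement and the accompanying case analysis: one must distinguish the positions of $[k_1,l_1]$ and $[k_2,l_2]$ (disjoint, nested, properly overlapping), check that in each case the linear invasions from the two ends leave a clean middle over $[\max(k_1,k_2),\min(l_1,l_2)]$, and show that a sum of two weakly-closed-graph minimal primes there is still of that type --- which, as emphasized, cannot be reduced to ``the union of the two graphs is weakly closed'' and instead requires the disjoint-interval-hull analysis of the essential cut sets of that sum. The invasion statement and the reduction machinery of the first paragraph are, by contrast, routine once set up.
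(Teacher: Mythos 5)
Your overall architecture coincides with the paper's: the same reduction (radicality plus Remark \ref{sumint} reduces everything to minimal primes of pairwise sums of primes already obtained), the same atoms, an ``invasion'' step that is exactly the content of the paper's Proposition \ref{lemma1} and Proposition \ref{lemma2} (your computation of $\Min\bigl(I_2(X_{[a,b]})+(y_a,\ldots,y_c)\bigr)$ is precisely the dichotomy $x_i\in P$ versus $y_{i+1}\in P$ that the paper iterates there), and a ``merging'' step that is the paper's Lemma \ref{lemmaP+Q}. The one genuine divergence is how the merging is closed, and it is also the one place where your argument is a declared sketch rather than a proof. The paper avoids your combinatorial analysis entirely: it intersects $P$ and $Q$ with $I_2(X_{[1,n]})$ for $n$ large, observes (via Sharifan's description of ideals with two associated primes) that $I_2(X)\cap P$ and $I_2(X)\cap Q$ are binomial edge ideals of weakly closed graphs, and then uses distributivity of sums over intersections for Knutson ideals to identify $I_2(X)\cap(P+Q)=I_2(X)\cap L_1\cap\ldots\cap L_t$ as the minimal primary decomposition of a binomial edge ideal of a weakly closed graph, so that each $L_i=\overline{L_i}$ by Proposition \ref{wc-cf}. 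This buys a short, purely algebraic argument at the price of invoking the Knutson machinery once more inside the proof.

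Your alternative route for the merging --- write $L_1+L_2$ as $(x_s,y_s)_{s\in S}+J_H$ with $H$ a union of cliques on intervals (minus killed vertices), and show that for any cut set the connected components have pairwise disjoint interval hulls whose internal gaps lie in the killed set --- does appear to be workable: a union of cliques supported on intervals is in fact closed in the induced labeling, chained overlapping intervals union to an interval, and two distinct components cannot have overlapping hulls because the overlap would force a vertex of one component into the hull, hence into the cut set or into the other component. But none of this is carried out in your proposal; the key claim (``passing to a minimal prime forces disjoint interval hulls'') is asserted, and you explicitly defer ``the real work''. Note also that your worry that ``the union of the two underlying graphs need not be weakly closed'' is aimed at the wrong graphs: what matters for $L_1+L_2$ is the union of the clique decompositions of $\overline{L}_1$ and $\overline{L}_2$, which is automatically closed since the blocks are intervals; the actual difficulty is only the bookkeeping of killed vertices and hulls. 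So: correct plan, same skeleton as the paper, one substantive lemma left unproved for which the paper has a cleaner argument you could simply adopt.
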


This characterization, together with \cite[Theorem 3.2]{HHHKR}, enables us to prove Theorem \ref{cf-wc}. In fact, it shows that among these minimal primes, those which are minimal primes of binomial edge ideals have the property described in Proposition \ref{psps}. Hence, the underlying graphs must be weakly-closed.\\

Since the proof of Proposition \ref{PrIdCf} is fairly technical, we defer it until after the proof of the main theorem of this section.

\begin{proof}[Proof of Theorem \ref{cf-wc}]
We have already proved in Proposition \ref{wc-cf} that if $G$ is weakly closed, $J_G$ is a Knutson ideal associated with $f$. It remains to prove that if $J_G$ is a Knutson binomial edge ideal, then $G$ is a weakly closed graph. \par 
Let $J_G$ be a binomial edge ideal in $\mathcal{C}_f$. By \cite[Theorem 3.2]{HHHKR}, we know that a primary decomposition of $J_G$ is given by 
$$J_G= \bigcap_{S \subseteq [n]} P_S$$
where $P_S:= \Bigl( \bigcup_{i \in S} \{x_i,y_i\}\Bigr)+ J_{\widetilde{G}_1}+\ldots+ J_{\widetilde{G}_{c(S)}}.$ On the other hand, by Proposition \ref{PrIdCf}, we know that $P_S =\overline{P}_S$ for every minimal prime of $J_G$. Hence, the thesis follows from Proposition \ref{psps}.
\end{proof}

In view of the proof of Proposition \ref{PrIdCf} we first recall a property of Knutson ideals that we are going to use throughout this section  in some more or less evident form .
 \begin{oss}\label{sumint}
 If $I,J,K$ are Knutson ideals, then sum distributes over intersection:
 $$I +(J \cap K)=(I+J) \cap (I+K).$$
 This fact easily follows from \cite[Remark 1]{Se1} and from the fact that the union of Gr\"obner bases of Knutson ideals associated with $f$ is a
Gr\"obner basis of their sum.
 \end{oss}

By definition, $\mathcal{C}_f$ is constructed from $(f)$ by taking its minimal primes, their sums, their intersections, and iterating. Since by Remark \ref{sumint} finite sums and intersections commute, we only need to prove Proposition \ref{PrIdCf} for sums of minimal prime ideals of Knutson ideals.\par 
Furthermore, by definition, the sum of two Knutson ideals is again a Knutson ideal. Since we know that binomial edge ideals of weakly closed graphs are in $\mathcal{C}_f$, so are their minimal primes and the sums of these minimal primes. However, the sum of two prime ideals could be not prime. Hence, it is clear that one of the first thing we need to check in order to prove Proposition \ref{PrIdCf} is the following:
\begin{lem}\label{lemmaP+Q}
Assume that $P$ and $Q$ are minimal primes of the binomial edge ideals of two weakly closed graphs, so that $P=\overline{P}$ and $Q=\overline{Q}$. Then every minimal prime $L$ of the sum $P+Q$ has the property $L=\overline{L}$ described in the previous section.
\end{lem}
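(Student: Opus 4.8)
The plan is to rewrite $P+Q$ as the sum of an ideal generated by variables and the binomial edge ideal of a \emph{closed} graph, and then to read off its minimal primes from \cite[Theorem 3.2]{HHHKR} together with the argument already used in the proof of Proposition \ref{wc-cf}.

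Since $P=\overline P$ and $Q=\overline Q$, we may write
\[
P=\Bigl(\bigcup_{s\in S_P}\{x_s,y_s\}\Bigr)+\sum_i I_2\bigl(X_{[a_i,b_i]}\bigr),\qquad Q=\Bigl(\bigcup_{s\in S_Q}\{x_s,y_s\}\Bigr)+\sum_j I_2\bigl(X_{[c_j,d_j]}\bigr),
\]
where the intervals appearing are the column--spans of the connected components entering the two $\overline P$'s. Put $S':=S_P\cup S_Q$ and let $\{I_\ell\}_\ell$ be the (in general overlapping) family of all these intervals, so that $P+Q=\bigl(\bigcup_{s\in S'}\{x_s,y_s\}\bigr)+\sum_\ell I_2(X_{I_\ell})$. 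The crucial observation is that for an arbitrary family of intervals $I_\ell\subseteq[n]$ one has $\sum_\ell I_2(X_{I_\ell})=J_{G'}$, where $G'$ is the graph on $[n]$ equal to the union $\bigcup_\ell K_{I_\ell}$ of the complete graphs on these intervals; and such a $G'$ is closed for the natural labeling, since if $i<j<k$ and $\{i,k\}\in E(G')$ then $\{i,k\}\subseteq I_\ell$ for some $\ell$, hence $j\in[i,k]\subseteq I_\ell$ and so $\{i,j\},\{j,k\}\in E(G')$.

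Thus $P+Q=\bigl(\bigcup_{s\in S'}\{x_s,y_s\}\bigr)+J_{G'}$ with $G'$ closed. Every generator $f_{ij}$ of $J_{G'}$ with $i\in S'$ or $j\in S'$ already lies in the first summand, so $P+Q=\bigl(\bigcup_{s\in S'}\{x_s,y_s\}\bigr)+J_{G'_T}$ with $T:=[n]\setminus S'$, and $J_{G'_T}$ is a binomial edge ideal in the variables $\{x_t,y_t:t\in T\}$, which are disjoint from the generators of the first summand. As a restriction of a closed graph is again closed (for the induced labeling), $G'_T$ is closed, so that $P+Q$ is, up to the variables $x_s,y_s$ ($s\in S'$), the binomial edge ideal $J_{G'_T}$; hence by \cite[Theorem 3.2]{HHHKR} every $L\in\Min(P+Q)$ has the form $L=\bigl(\bigcup_{s\in S'}\{x_s,y_s\}\bigr)+P_U$ for some $U\subseteq T$. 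Moreover, applying the argument in the proof of Proposition \ref{wc-cf} to the closed (hence weakly closed) graph $G'_T$ gives $P_U=\overline P_U$. Therefore $L$ is a sum of ideals $(x_s,y_s)$ and of ideals $I_2(X_{[\alpha_p,\beta_p]})$ of $2$-minors on intervals, the extra minors on columns of $S'\cup U$ being absorbed into the variable part exactly as in that proof; in other words $L=\overline L$.

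The point demanding real care is the claim that the intervals $[\alpha_p,\beta_p]$ occurring in $L$ may be taken pairwise disjoint, so that $L$ genuinely has the shape $\overline L$ rather than merely being a sum of overlapping determinantal blocks. These intervals are the column--spans of the connected components of the closed graph $G'_{T\setminus U}$, and the disjointness follows from closedness: the vertex set of every connected component of a closed graph is convex inside the ambient vertex set, and if the spans of two distinct components shared a vertex $v$ — necessarily a vertex removed in passing from $G'$ to $G'_{T\setminus U}$ — then picking in each component a path joining its minimal and maximal vertices (these paths crossing $v$ since $v$ is removed) and combining the two resulting crossing edges via closedness would force an edge between the two components, a contradiction. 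I expect this disjointness check, rather than the structural reduction above, to be the only genuinely fiddly step.
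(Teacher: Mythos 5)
Your proof is correct, but it takes a genuinely different and more elementary route than the paper's. The paper first disposes of the case $P,Q\subseteq I_2(X)$, and in the general case invokes the Knutson-ideal machinery: it enlarges $n$, replaces $P$ and $Q$ by the binomial edge ideals $I_2(X)\cap P$ and $I_2(X)\cap Q$ of two weakly closed graphs (via the join construction of the remark preceding the lemma), uses distributivity of sums over intersections in $\mathcal{C}_f$ (Remark \ref{sumint}) to identify $I_2(X)\cap(P+Q)$ with $J_{\widetilde G_1\cup\widetilde G_2}$, and then checks irredundancy of $I_2(X)\cap L_1\cap\dots\cap L_t$ so that each $L_i$ is literally a minimal prime of the binomial edge ideal of a weakly closed graph. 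You bypass all of this by observing that $P+Q$ is, on the nose, an ideal of variables plus the binomial edge ideal of a \emph{closed} graph $G'_T$ on the remaining columns, so that its minimal primes can be read off from \cite[Theorem 3.2]{HHHKR} together with the gap-filling argument of Proposition \ref{wc-cf}; this yields a shorter, self-contained proof needing neither Remark \ref{sumint} nor the choice of a larger $n$, while the paper's route delivers directly the marginally stronger conclusion (used in later statements) that each $L_i$ is an actual minimal prime of a binomial edge ideal of a weakly closed graph --- a fact which in your setup one still recovers from $L=\overline L$ via the same join construction. One small imprecision: in your disjointness check the shared vertex $v$ of two spans need not be a removed vertex (it may be a vertex of one of the two components); but the same crossing-edge-plus-closedness argument, or simply the observation that every vertex of $[\alpha_p,\beta_p]$ lies in $V(H_p)\cup S'\cup U$, still produces the contradiction, so nothing is lost.
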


Before proving the lemma, we make the following observation about the structure of binomial edge ideals with two associated primes.

\begin{oss}
Let $P$ be a minimal prime of a binomial edge ideal on $n$ vertices, then it has the form 
$$P=\left( \bigcup_{i \in S} \{x_i,y_i\}, J_{G_1}, \ldots, J_{G_t} \right)$$ where each $G_i$ is a complete graph with vertex set $V_i$. Denote by $\widetilde{V}$ the set of vertices that do not appear in $P$. Then $I_2(X_{[1,n]}) \cap P$ is the primary decomposition of the binomial edge ideal of the graph
$$G= K_{S} \cup K_{S,\widetilde{V}}\cup K_{S,V_1} \cup \ldots \cup K_{S,V_t}\cup G_1 \cup \ldots \cup G_t $$ 
 where $K_S$ denote the complete graph on $S$ and $K_{S,V_i}$ denote the complete bipartite graph on $S$ and $V_i$.\par 
 Actually, in \cite{Sh} Sharifan proved that if $G$ is a connected graph on $n$, then
 $\Ass|(J_G)|=2$ if and only if $G$ is the join of a complete graph $G_1$ and  a graph $G_2$ which is a disjoint union of complete graphs.\par 
 Let us take as an example  $P= (x_3,y_3, I_2(X_{[4,6]}))$. Then $S=\{3\}$, $V_1=\{4,5,6\}$ and $\tilde{V}= \{1,2\}$. Hence,
 $$G=K_{3,V_1} \cup K_{3,\widetilde{V}} \cup K_{[4,6]}$$
\begin{figure}[htbp]
\centering
\includegraphics[scale=0.66]{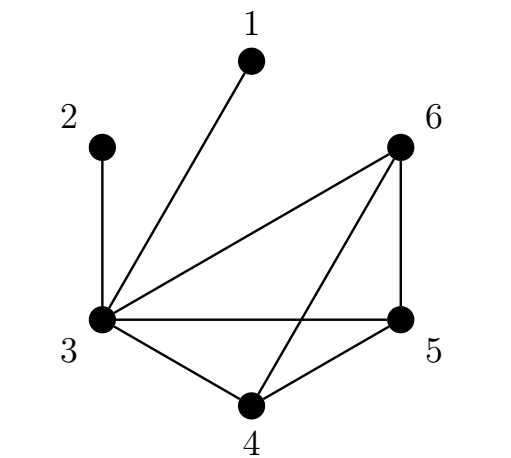} 
\end{figure}
%\begin{center}
%\begin{tikzpicture}[rotate=90]
%\GraphInit[vstyle=Classic]
%\tikzset{VertexStyle/.append style={minimum size=3pt, inner sep=3pt}}
%\Vertices[Math,Lpos=90,unit=2]{circle}{1,2,3,4,5,6}
%\Edges(1,3,4,6)
%\Edges(2,3,5,6)
%\Edges(3,6)
%\Edges(4,5)
%%\WE[Math,Lpos=90,unit=2.75](v_1){v_7} % Brutal
%%\SOEA[Math,Lpos=0,unit=.5](v_7){v_6} 
%\end{tikzpicture}
% \end{center}

and $P \cap I_2(X_{[1,6]})= J_{G}=([3,6],[3,5],[3,4],[2,3],[1,3],[5,6],[4,6],[4,5])$. 
 \end{oss}

With this in mind we can now prove Lemma \ref{lemmaP+Q}.

\begin{proof}
By assumption, there exist $G_1$ and $G_2$ weakly closed graphs such that $P \in \Min (J_{G_1})$ and $Q \in \Min (J_{G_2})$.
Let $P+Q=L_1\cap \ldots\cap \L_t$ be the minimal primary decomposition of $P+Q$. We want to prove that the $L_i$ are minimal prime ideals of the binomial edge ideal of a weakly closed graph, so that $L_i=\overline{L_i}$ for every $i$.\par
Note that we can always choose $n$ big enough such that $L_i\nsupseteq I_2(X_{[1,n]})$ for every $i\in \{1, \ldots,t\}$. Let $X=X_{[1,n]}$.\par   
Assume for the moment that $P,Q \subseteq I_2(X)$, then $P$ and $Q$ must not contain variables, that is
\begin{align*}
P&= J_{\overline{G}_P}\\
Q&=J_{\overline{G}_Q}
\end{align*}

where $\overline{G}_P$ and $\overline{G}_Q$ are unions of disjoint complete graphs on consecutive vertices (because $G_1$ and $G_2$ are weakly closed). Hence
$$P+Q= J_{\overline{G}_P}+J_{\overline{G}_Q}= J_{\overline{G}_P\cup \overline{G}_Q}.$$
Being $\overline{G}_P\cup \overline{G}_Q$ a weakly closed graph, $L=\overline{L}$ for every ideal $L \in \Min (P+Q)$.\par
Now assume without loss of generality that $P \nsubseteq I_2(X)$. 
 By Theorem \ref{wc-cf}, we know that $J_{G_1}, J_{G_2} \in \mathcal{C}_f$ and so are $P$ and $Q$. Hence $P+Q \in \mathcal{C}_f$. Now we consider the intersections 
\begin{align*}
I_2(X) &\cap P \\
I_2(X) &\cap Q.
\end{align*}
By the previous remark, we know that these are binomial edge ideals. Furthermore, being $P=\overline{P}$ and $Q=\overline{Q}$, these intersections are binomial edge ideals of two weakly closed graphs, say $\widetilde{G}_1$ and $\widetilde{G}_2$. Again by Theorem \ref{wc-cf}, $J_{\widetilde{G}_1}$ and $J_{\widetilde{G}_2}$ are Knutson ideals of $f$, so $$J_{\widetilde{G}_1 \cup \widetilde{G}_2}=J_{\widetilde{G}_1}+J_{\widetilde{G}_2} \in \mathcal{C}_f.$$
By Remark \ref{sumint}
 
\begin{equation*}
\begin{split}
J_{\tilde{G}_1 \cup \tilde{G}_2}=J_{\tilde{G}_1}+J_{\tilde{G}_2}&=( I_2(X) \cap P)+( I_2(X) \cap Q)\\
&=I_2(X) \cap (I_2(X) +Q) \cap (I_2(X) +P) \cap (P+Q)\\
&= I_2(X) \cap (P+Q)\\
&= I_2(X) \cap (L_1 \cap \ldots \cap L_t)
\end{split}
\end{equation*}

If this were the minimal primary decomposition of $J_{\tilde{G}_1 \cup \tilde{G}_2}$, then $L_1 , \ldots,L_t$ would be minimal prime ideals of a binomial edge ideal of a weakly closed graph, hence the thesis.\par 
Since we have assumed that $P \nsubseteq I_2(X)$, clearly $I_2(X) \nsupseteq L_1 \cap \ldots\cap L_t=P+Q$. Moreover $L_i \nsupseteq I_2(X) \cap L_1 \cap \ldots\cap L_{i-1} \cap L_{i+1} \cap \ldots\cap  L_t $, otherwise $L_i$ would contain either $I_2(X)$ or $L_j$ for some $j\neq i$, but this is impossible by the choice of $X$ and the fact that the $L_i$ are minimal primes of $P+Q$. This shows that $I_2(X) \cap (L_1 \cap \ldots \cap L_t)$ is the minimal primary decomposition of $J_{\tilde{G}_1 \cup \tilde{G}_2}$ and we are done.
\end{proof}

To construct $\mathcal{C}_f$, we start from the ideal $$(f)=(x_n f_{12}\cdots f_{n-1,n} y_1)$$ and we take its minimal primes. Thus we obtain the following ideals:
$$(x_n), (f_{12}),(f_{23}),\ldots,(f_{n-1n}),(y_1).$$
Among them the only binomial edge ideals are those of the form $(f_{i,i+1})$, which corresponds to the graph with exactly one edge, namely $\{i,i+1\}$, which is clearly weakly closed. If we take the sum of these binomial edge ideals, we obtain binomial edge ideals of (union of) paths on consecutive vertices. We can then consider their associated primes, the sum of these primes, and the intersections. Iterating this procedure, by Lemma \ref{lemmaP+Q} and Remark \ref{sumint}, the prime ideals we obtain are always of the form 
$$P_S=\overline{P}_S= \left( \bigcup_{s \in S} \{x_s,y_s\}, I_2 ( X_{[a_1, b_1]}), \ldots, I_2 ( X_{[a_{c(S)}, b_{c(S)}]}) \right).$$
In particular, if the intersection of these primes is a binomial edge ideal $J_G$, $G$ must be weakly closed by Proposition \ref{psps}.\\

It remains to investigate the case when we start from an ideal that contains $(x_n)$ or $(y_1)$ and we iteratively take its minimal primes, their sums and the minimal primes of these sums. It can be shown that in this case we obtain prime ideals which can be written as the sum of an ideal generated by variables and an ideal $L$ with $L=\overline{L}$ as in Proposition \ref{PrIdCf}.\\
%It seems that in this case we obtain prime ideals of the form:\\
%
%$$\begin{pmatrix}
%           &  &  & x_{i-1} \\
%    y_1,       & y_2, & \cdots & ,y_{i-1} \\
%   
%\end{pmatrix}
%+L_S+
%\begin{pmatrix}
%     x_{l+1},      & \cdots &  & & \cdots& x_n \\
%    y_{l+1}       &  & &  &  &  \\
%
%
% \end{pmatrix}$$
% 
% with $L_S=\overline{L}_S$ (and we allow each of the three addend to be the zero ideal).\par
% This, together with Proposition \ref{psps}, would prove Theorem \ref{cf-wc} that the only binomial edge ideals we can obtain in $\mathcal{C}_f$ are those associated with weakly closed graphs.\\
 
 Let $I$ be the sum of some minimal primes of $(f)$ that contains $(y_1)$ but not $(x_n)$. Then 
 $$I:=(y_1,f_{i_1,i_1+1},\ldots,f_{i_k ,i_k+1}).$$
 Note that we can always reduce the case $I=(y_1,f_{1,2},\ldots,f_{k-1, k})$. In fact, if $i_1 >1$ or if there exists an index $j$ such that $i_{j+1} \neq i_j +1$, then we can write $I$ as a sum of ideals on disjoint sets of variables, say $I= I_1+\ldots+I_t$. Hence the minimal primes of $I$ are sums of minimal primes of each $I_j$. Among them, the only minimal primes that we still have to study are those of the ideal $(y_1,f_{1,2},\ldots,f_{k-1, k})$ for some $k \in \{1, \ldots,n\}$. Analogously, if $I$ is the sum of some minimal primes of $(f)$ that contains $(x_n)$ but not $(y_1)$.\par 
 
 \begin{prop}\label{lemma1}
Let $I:=(y_1,f_{1,2},\ldots,f_{k-1, k})$ and let $P \in \Min (I)$. Then 
 \begin{enumerate}
 \item $P=(y_1, \ldots, y_k)$, or
 
 \item $P=\left(y_1,y_2, \ldots, y_{i-1},x_{i-1}\right)
+P_S $ for some $i \in \{2,\ldots,k\}$, where $P_S $ is a minimal prime of the binomial edge ideal of the path $$\{i,i+1\}, \ldots, \{k-1,k\}.$$ Hence, $P_S =\overline{P_S}$.
\end{enumerate}

Analogously, if $I:= (f_{k ,k+1}, \ldots,f_{n-1,n},x_n)$ and  $Q \in  Min (I)$ then 

\begin{enumerate}
\item $Q=\left( x_k, \ldots,x_n \right)$, or
 
\item $Q=\left( y_{l+1},x_{l+1},x_{l+2}, \ldots,x_n\right)+ Q_T$ where $Q_T $ is a minimal prime of the binomial edge ideal of the path $$\{k,k+1\} \ldots \{l-1,l\}.$$
Hence, $Q_T =\overline{Q_T}$.

\end{enumerate}
\end{prop}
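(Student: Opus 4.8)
The plan is to compute the minimal primes of $I=(y_1,f_{1,2},\ldots,f_{k-1,k})$ directly by a Gröbner/colon calculus, exploiting the fact that $I$ is a Knutson ideal (it is a sum of minimal primes of $(f)$, hence in $\mathcal{C}_f$), so it is radical and equals the intersection of its minimal primes. First I would set up the recursion on $k$. For $k=1$ we have $I=(y_1)$, which is already prime, matching case (1). For the inductive step, observe that $f_{k-1,k}=x_{k-1}y_k-x_k y_{k-1}$, and that $I=(y_1,f_{1,2},\ldots,f_{k-2,k-1})+(f_{k-1,k})$; the first summand is the ideal of the same shape on $k-1$ vertices, whose minimal primes are known by induction, and the second is a principal ideal. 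So by Remark \ref{sumint}, $P\in\Min(I)$ is a minimal prime of $P'+(f_{k-1,k})$ for some $P'\in\Min(y_1,f_{1,2},\ldots,f_{k-2,k-1})$, and I only need to analyze, for each of the two inductive shapes of $P'$, the minimal primes of $P'+(f_{k-1,k})$.

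The key case-split is on whether $y_{k-1}\in P'$. If $P'=(y_1,\ldots,y_{k-1})$ (the ``case (1)'' shape at level $k-1$), then $f_{k-1,k}=x_{k-1}y_k-x_ky_{k-1}\equiv x_{k-1}y_k$ modulo $P'$, so $P'+(f_{k-1,k})=(y_1,\ldots,y_{k-1},x_{k-1}y_k)$, whose minimal primes are $(y_1,\ldots,y_{k-1},y_k)$ — this is case (1) at level $k$ — and $(y_1,\ldots,y_{k-1},x_{k-1})$, which is the ``case (2)'' shape at level $k$ with $i=k$ (the path $\{i,i+1\},\ldots,\{k-1,k\}$ is empty and $P_S=(0)$). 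If instead $P'$ has the ``case (2)'' shape $(y_1,\ldots,y_{i-1},x_{i-1})+P_S$ with $P_S$ a minimal prime of the path $\{i,i+1\},\ldots,\{k-2,k-1\}$, then adding $f_{k-1,k}$ extends $P_S$ by one more path-edge. Here I would invoke the known description of the minimal primes of binomial edge ideals of paths from \cite[Theorem 3.2]{HHHKR} (equivalently, Proposition \ref{wc-cf} and Proposition \ref{psps}, since a path is weakly closed): a minimal prime of the path on $\{i,\ldots,k\}$ either is the full binomial edge ideal's extension of $P_S$ that keeps $y_{k-1}$ or $x_k$ behavior consistent, or splits off a new variable pair. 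In either sub-case the result is again of the form $(y_1,\ldots,y_{i'-1},x_{i'-1})+P_{S'}$ with $P_{S'}$ a minimal prime of a sub-path, hence $P_{S'}=\overline{P_{S'}}$ by Proposition \ref{psps}, which gives the stated conclusion.

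The main obstacle I anticipate is the bookkeeping in the ``case (2)'' inductive step: when I add the generator $f_{k-1,k}$ to $(y_1,\ldots,y_{i-1},x_{i-1})+P_S$, I must carefully track how $P_S$ — itself a minimal prime of a path binomial edge ideal, hence a sum $\bigl(\bigcup_{s\in S}\{x_s,y_s\}\bigr)+I_2(X_{[a_1,b_1]})+\cdots$ — interacts with the new quadric, and verify that no spurious embedded or non-``$\overline{\phantom{P}}$'' primes appear. The clean way around this is to use Knutsonness throughout: the ideal $I$ is radical with squarefree initial ideal, so $\Min(I)$ is read off from $\Min(\init_\prec(I))$, and the latter is a squarefree monomial ideal which I can compute by iterating $\init_\prec(A+B)=\init_\prec(A)+\init_\prec(B)$ (valid for Knutson ideals, Remark \ref{sumint}). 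Thus the whole argument reduces to a transparent combinatorial recursion on the monomial side, and the displayed forms (1) and (2) follow by matching the associated primes of the resulting squarefree monomial ideal. The ``analogously'' statement for $I=(f_{k,k+1},\ldots,f_{n-1,n},x_n)$ is obtained by the symmetry $x_j\leftrightarrow y_{n+1-j}$, which reverses the roles of the two rows of $X$ and the vertex order, carrying $(y_1)$-type ideals to $(x_n)$-type ideals.
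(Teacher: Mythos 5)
Your inductive set-up (write $I=(y_1,f_{1,2},\ldots,f_{k-2,k-1})+(f_{k-1,k})$, distribute the sum over the intersection of the minimal primes of the first summand via Remark \ref{sumint}, and analyse $\Min(P'+(f_{k-1,k}))$ for each shape of $P'$) is legitimate and genuinely different from the paper's argument, and your ``case (1)'' branch is handled correctly. But the crux of the induction is exactly the step you leave vague: when $P'=(y_1,\ldots,y_{i-1},x_{i-1})+P_S$ you must actually prove that every minimal prime of $P_S+(f_{k-1,k})$ is again a minimal prime of the binomial edge ideal of the longer path $\{i,i+1\},\ldots,\{k-1,k\}$. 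The sentence about keeping ``$y_{k-1}$ or $x_k$ behavior consistent'' or ``splitting off a new variable pair'' is not an argument. The step can be completed --- for instance, since $k-1$ is a leaf of the path on $\{i,\ldots,k-1\}$ it is never a cut vertex, so $x_{k-1},y_{k-1}\notin P_S$ and the last block of $P_S$ is some $I_2(X_{[a,k-1]})$; then $\Min\bigl(I_2(X_{[a,k-1]})+(f_{k-1,k})\bigr)=\{\,I_2(X_{[a,k]}),\ (x_{k-1},y_{k-1})+I_2(X_{[a,k-2]})\,\}$ by \cite[Theorem 3.2]{HHHKR}, and both options reassemble with the rest of $P_S$ into minimal primes of the longer path --- but as written this is a gap, and it is the entire content of the inductive step.

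The proposed ``clean way around'' is not a way around: it is false that $\Min(I)$ can be read off from $\Min(\init_{\prec}(I))$, even for Knutson ideals. The ideal $I_2(X_{[1,n]})$ is prime, yet its (squarefree) initial ideal has many minimal primes; in general the minimal primes of $I$ are not monomial and cannot be reconstructed from the monomial primes of $\init_{\prec}(I)$, so the ``combinatorial recursion on the monomial side'' cannot establish forms (1) and (2). For comparison, the paper's proof avoids all of this with a much shorter device: modulo $y_1$ the generator $f_{1,2}$ becomes $x_1y_2$, so any minimal prime contains $x_1$ or $y_2$; adding $(x_1,y_1)$ splits off the path ideal $(f_{2,3},\ldots,f_{k-1,k})$ on a disjoint set of variables (case (2) with $i=2$), while adding $y_2$ reproduces an ideal of the same shape shifted one step to the right, and iterating yields exactly (1) and (2). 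Your symmetry $x_j\leftrightarrow y_{n+1-j}$ for the second half of the statement is fine.
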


\begin{proof}
We prove only the first part of the proposition. Simmetrically, one can prove the analogous result for $Q$. We start off  by noticing that 
$$I=(y_1, x_1 y_2,f_{2,3} \ldots, f_{k-1, k}).$$
Therefore, if $P$ is a minimal prime of $I$, we have that $P \supset (x_1 y_2)$. Since $P$ is prime, we have two possibilities, either $x_1 \in P$ or $y_2 \in P$.\par 
If $x_1 \in P$, then $P \supseteq (x_1,y_1)+ I \supseteq I$ and if we set $\tilde{I}=(x_1,y_1)+ I$, we have that $P \in \Min (\tilde{I})$. But $\tilde{I}= (x_1,y_1)+(f_{2,3},\ldots, f_{k-1,k})$ is a sum of two ideals generated by polynomials in disjoint sets of variables. Hence we conclude that $P= (x_1,y_1)+ P_S$ where $P_S$ is a minimal prime of the ideal $(f_{2,3},\ldots, f_{k-1,k})$.\par 
If $y_2 \in P$, then $P \supseteq (y_1,y_2)+ I \supseteq I$ and if we set, as before, $\tilde{I}=(x_1,y_1)+ I$, we get that $P \in \Min (\tilde{I})$. Thus we need to study the minimal primes of $\tilde{I}$. Again, we notice that
$$\tilde{I}=(y_1,y_2,x_2 y_3,f_{3,4},\ldots,f_{k-1,k}).$$
Thus, $P\supset \tilde{I} \supset (x_2 y_3)$ and, since $P$ is a prime ideal, we deduce that either $x_2 \in P$ or $y_3 \in P$. We can then iterate the previous argument and get the thesis.
\end{proof}

\begin{oss}\label{step1} More generally, if $P \in \Min \left( y_1,f_{1,2},\ldots,f_{n-1,n},x_n\right)$, then

\begin{enumerate}
\item $P=(y_1,x_1, \ldots,x_n)$ or $P=(y_1, \ldots,y_n,x_n)$, or  
 \item  $P=\left( y_1,y_2, \ldots, y_{i-1},x_{i-1} \right)
+P_S + \left( y_{l+1},x_{l+1},x_{l+2}, \ldots,x_n \right)$  
   
where $P_S$ is a minimal prime of the binomial edge ideal of the path
$$\{i,i+1\}, \ldots, \{l-1,l\}.$$

Hence, $P_S =\overline{P_S}$.\par
\end{enumerate}
\end{oss}

Putting together Lemma \ref{lemmaP+Q}, Proposition \ref{lemma1}, and Remark \ref{step1} we get the following result about the primary decomposition of sums of minimal primes of $(f)$.

\begin{lem}\label{step2} Let $P_1, \ldots,P_k$ be minimal primes of $ \left( f \right)$ and let $Q \in \Min (P_1+\ldots+P_k)$. Then
\begin{equation}\label{primiQ}
Q=\left( y_1,y_2, \ldots, y_{i-1},x_{i-1} \right)
+L + \left( y_{l+1},x_{l+1},x_{l+2}, \ldots,x_n \right)
\end{equation}
   
where $L =\overline{L} \subset \mathbb{K}[x_i,x_{i+1},\ldots,x_{l-1},x_l]$ is a minimal prime of the binomial edge ideal of a weakly closed graph and each of the three summands may possibly be the zero ideal.

\end{lem}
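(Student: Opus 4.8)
The plan is to reduce the sum $P_1+\dots+P_k$ to a bounded list of building blocks supported on pairwise disjoint sets of variables, to describe the minimal primes of each block using the results already established, and then to glue. After discarding repetitions, $I:=P_1+\dots+P_k$ is generated by the binomials $f_{j,j+1}$ for $j$ ranging over some set $A\subseteq\{1,\dots,n-1\}$, together with possibly $y_1$ and/or $x_n$. Decomposing $A$ into its maximal runs of consecutive integers, and pulling the (possible) generator $y_1$ into the run that reaches vertex $1$ and $x_n$ into the run that reaches vertex $n$, one can write $I = I_{\mathrm{left}}+I_1+\dots+I_s+I_{\mathrm{right}}$ as a sum of ideals on pairwise disjoint sets of variables, where each $I_t$ is the binomial edge ideal of a path on consecutive vertices --- in particular of a weakly closed graph --- while $I_{\mathrm{left}}$ is either $0$, or $(y_1)$, or an ideal $(y_1,f_{12},\dots,f_{k-1,k})$ as in Proposition~\ref{lemma1}, or (in the extreme case of a single run reaching both endpoints) the full ideal $(y_1,f_{12},\dots,f_{n-1,n},x_n)$ of Remark~\ref{step1}; symmetrically for $I_{\mathrm{right}}$.

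Next I would use that the minimal primes of a sum of ideals on pairwise disjoint sets of variables are exactly the sums of minimal primes of the summands; thus every $Q\in\Min(I)$ has the form $Q = Q_{\mathrm{left}}+Q_1+\dots+Q_s+Q_{\mathrm{right}}$ with $Q_{\mathrm{left}}\in\Min(I_{\mathrm{left}})$, $Q_t\in\Min(I_t)$, $Q_{\mathrm{right}}\in\Min(I_{\mathrm{right}})$. By the proof of Proposition~\ref{wc-cf} (equivalently Proposition~\ref{psps}) each $Q_t$ satisfies $Q_t=\overline{Q_t}$; by Proposition~\ref{lemma1}, $Q_{\mathrm{left}}$ is either a pure top-row ideal $(y_1,\dots,y_m)$ or splits as $(y_1,\dots,y_{i-1},x_{i-1})+P_S$ with $P_S=\overline{P_S}$ a minimal prime of a path, and dually for $Q_{\mathrm{right}}$; when $I_{\mathrm{left}}=I_{\mathrm{right}}$ is the full ideal, the three cases are read off Remark~\ref{step1}.

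For the gluing step, observe that all of the interior primes --- the $Q_t$, together with the path part $P_S$ coming out of $Q_{\mathrm{left}}$ and the path part $Q_T$ coming out of $Q_{\mathrm{right}}$ --- are minimal primes of binomial edge ideals of weakly closed graphs supported on pairwise disjoint blocks of consecutive vertices; their sum is therefore prime, and iterating Lemma~\ref{lemmaP+Q} it is again a minimal prime $L=\overline{L}$ of the binomial edge ideal of a weakly closed graph, namely the disjoint union of the relevant weakly closed graphs (itself weakly closed, by concatenating the orderings). Absorbing every interior contribution into this single $L$, what survives is exactly the pair of endpoint variable parts $(y_1,\dots,y_{i-1},x_{i-1})$ and $(y_{l+1},x_{l+1},\dots,x_n)$, which gives the claimed shape~\eqref{primiQ}; the summands that may vanish, and the degenerate forms in which an endpoint part is a bare chain of $y$'s or of $x$'s, correspond to the possibility that a block is $0$ and to case~(1) of Proposition~\ref{lemma1} and Remark~\ref{step1}.

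I expect the main obstacle to be the bookkeeping rather than any isolated hard fact: one must verify carefully that the run decomposition genuinely lands on disjoint sets of variables, that passing to $\Min$ commutes with these disjoint sums (here it is convenient that all the ideals involved have geometrically irreducible zero loci, so the sums stay prime), and --- the most delicate point --- that a path part emerging from $Q_{\mathrm{left}}$ or $Q_{\mathrm{right}}$ merges with the interior primes into a single $\overline{L}$ without overlaps and without destroying minimality. The boundary situations where the left and right blocks coincide (Remark~\ref{step1}) or collapse to a bare $(y_1)$ or $(x_n)$ require separate, but routine, checks.
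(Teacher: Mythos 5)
Your proposal is correct and takes essentially the same route as the paper: the paper likewise reduces $P_1+\dots+P_k$ to a sum of ideals on disjoint runs of consecutive indices, handles the runs absorbing $y_1$ and/or $x_n$ via Proposition~\ref{lemma1} and Remark~\ref{step1}, and merges the interior path parts using Lemma~\ref{lemmaP+Q}. Your write-up actually makes the disjoint-variables decomposition and the final gluing step more explicit than the paper, which states the lemma as a direct consequence of those three results.
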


This argument can be generalized in order to see what happens if we take the sum of two prime ideals as in (\ref{primiQ}) in order to characterize all the minimal primes of Knutson ideals associated with $f$.\\

The next proposition generalizes Proposition \ref{lemma1}.

\begin{prop}\label{lemma2}
Consider two ideals of the form
\begin{align*}
P_1&=\left(y_1,\ldots,y_{i-1}\right)+\left(x_j\right)_{J_1 \subset \{1,\ldots, i-1\}}+P_{S_1}, \qquad \ P_{S_1}=\overline{P_{S_1}}\\ 
P_2&=\left(y_1,\ldots,y_{k-1}\right)+\left(x_j\right)_{J_2 \subset \{1,\ldots, k-1\}}+P_{S_2}, \qquad P_{S_2}=\overline{P_{S_2}} 
\end{align*}
where $P_{S_1} \subset \mathbb{K}[x_i,x_{i+1},\ldots,x_{n-1},x_n]$ and $P_{S_2}\subset \mathbb{K}[x_k,x_{k+1},\ldots,x_{n-1},x_n]$ are minimal primes of binomial edge ideals of weakly closed graphs. Let $P \in \Min \left(P_1+P_2\right)$. Then there exists an integer $l$ such that
$$P= \left(y_1,\ldots,y_{l}\right)+\left(x_u\right)_{U \subset \{1,\ldots, l\}}+L$$
with $L=\overline{L}$ in $ \mathbb{K}[x_{l+1},y_{l+1},\ldots,x_{n},y_{n}]$.\par 
Simmetrically, if we consider
\begin{align*}
P_1&=\left(x_i,\ldots,x_n\right)+\left(y_v\right)_{J_1 \subset \{i,\ldots, n\}}+P_{S_1}, \qquad \ P_{S_1}=\overline{P_{S_1}}\\ 
P_2&=\left(x_k,\ldots,x_n\right)+\left(y_v\right)_{J_2 \subset \{k,\ldots, n\}}+P_{S_2}, \qquad P_{S_2}=\overline{P_{S_2}} 
\end{align*}
where $P_{S_1} \subset \mathbb{K}[x_1,x_{2},\ldots,x_{i-1}]$ and $P_{S_2}\subset \mathbb{K}[x_1,x_{2},\ldots,x_{k-1}]$ are minimal primes of binomial edge ideals of weakly closed graphs. Let $P \in \Min \left(P_1+P_2\right)$. Then there esists an integer $l$ such that
$$P= \left(x_l,\ldots,x_n\right)+\left(y_v\right)_{V \subset \{l,\ldots, n\}}+L$$
with $L=\overline{L}$ in $ \mathbb{K}[x_{1},y_{1},\ldots,x_{l-1},y_{l-1}]$.\\
\end{prop}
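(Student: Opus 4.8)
The strategy is to reduce the general statement to the special case of Proposition~\ref{lemma1} via the same ``prime-splitting and iterate'' mechanism used there, together with Lemma~\ref{lemmaP+Q} to handle the genuinely binomial (variable-free) part. First I would focus on the first version of the statement (the second follows by the symmetry $x_i \leftrightarrow y_{n+1-i}$, which exchanges the roles of the two ``boundary'' summands, exactly as in Proposition~\ref{lemma1}). Write $P_1 + P_2 = \mathfrak{a} + \mathfrak{b} + (P_{S_1} + P_{S_2})$, where $\mathfrak{a} = (y_1,\ldots,y_{\max(i,k)-1})$ collects the $y$-variables common to the initial segments, and $\mathfrak{b}$ collects the $x$-variables appearing in either $J_1$ or $J_2$. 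Since $P_{S_1} + P_{S_2}$ is a sum of two Knutson ideals (by Theorem~\ref{cf-wc}, being minimal primes of binomial edge ideals of weakly closed graphs), Lemma~\ref{lemmaP+Q} tells us every minimal prime $L'$ of $P_{S_1}+P_{S_2}$ satisfies $L'=\overline{L'}$; but we must be careful because $P_{S_1}$ and $P_{S_2}$ live in overlapping sets of variables and, crucially, $P_{S_1}$ may involve indices $<k$ where $P_2$ already imposes $y_j = 0$ (and possibly some $x_j=0$). So the overlap region $\{\max(i,k),\ldots\}$ versus the ``collar'' region between $i$ and $k$ is where interaction happens.

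\textbf{Key steps.}
(1) Reduce to $i \le k$ and split off the disjoint-variable part: if the supports of $P_1$ and $P_2$ decompose into disjoint blocks, then $\Min(P_1+P_2)$ is obtained from minimal primes of the blockwise sums, and each block is either entirely of boundary type or entirely of the form treated in Lemma~\ref{lemmaP+Q}. This lets us assume the supports genuinely overlap. (2) In the overlap, use the fact that $f_{j,j+1}$-type generators of $P_{S_1}$, when reduced modulo the variables $y_1,\ldots,y_{k-1}$ appearing in $P_2$, become monomials $x_j y_{j+1}$ (exactly the phenomenon exploited in the proof of Proposition~\ref{lemma1}: $f_{j,j+1}\equiv x_j y_{j+1} \bmod (y_j)$). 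Hence in any minimal prime $P$ of $P_1+P_2$, either $x_j\in P$ — pushing the index of the initial $x$-collar forward — or $y_{j+1}\in P$ — pushing the $y$-segment forward. (3) Iterate this ``domino'' argument: starting from $y_{\max(i,k)-1}$ (or the largest $y$-index forced so far), repeatedly apply the prime-splitting to the leading $x_j y_{j+1}$ monomial until the leading generator is a genuine binomial $f_{j,j+1}$ with $y_j\notin P$. At that point the remaining generators form (a sum with) the binomial edge ideal of a subgraph on consecutive vertices $x_{l+1},\ldots,x_n$ with no further $y$-variables among the first $l$ — so define $l$ to be the stopping index. (4) Finally, apply Lemma~\ref{lemmaP+Q} to the variable-free remainder: the part of $P_{S_1}+P_{S_2}$ supported on $\{l+1,\ldots,n\}$ that survives is a sum of two minimal primes of weakly closed binomial edge ideals, so every minimal prime $L$ of it has $L=\overline{L}$, and $L\subseteq \mathbb{K}[x_{l+1},y_{l+1},\ldots,x_n,y_n]$ as required. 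Assembling, $P = (y_1,\ldots,y_l) + (x_u)_{U\subset\{1,\ldots,l\}} + L$.

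\textbf{Main obstacle.}
The delicate point is step (2)--(3): controlling what happens in the ``collar'' between the two boundary regions, where one of $P_{S_1}, P_{S_2}$ already contributes binomial generators while the other contributes variable generators, so the reductions $f_{j,j+1}\equiv x_jy_{j+1}$ interact nontrivially and one must verify that the domino always propagates in a single direction (increasing index) and terminates, rather than, say, forcing contradictory conditions or leaving a gap that breaks the ``consecutive vertices'' structure. Concretely, one has to check that once $y_{j}\in P$ for all $j$ up to some point and the next leading term is $x_j y_{j+1}$, choosing $y_{j+1}\in P$ does not later force $x_m\in P$ for some $m>j+1$ in a way incompatible with $L=\overline{L}$; this is exactly where the weak-closedness of the graphs underlying $P_{S_1}$ and $P_{S_2}$ — via Proposition~\ref{psps} applied to the relevant restrictions — must be invoked to guarantee that the surviving binomial part is again (a minimal prime of) a weakly closed graph on consecutive vertices. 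I would handle it by induction on $n - l$ (equivalently, on the number of domino steps), the base case being precisely Lemma~\ref{step2}.
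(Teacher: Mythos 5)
Your proposal follows essentially the same route as the paper: reduce the straddling $2$-minors modulo the $y$-variables already present to monomials $x_uy_v$, use the resulting prime dichotomy (all the $x$'s or all the $y$'s) to push the boundary index forward, iterate until the variable part and the binomial part separate, and finish with Lemma \ref{lemmaP+Q}. The only presentational difference is that the paper runs the dichotomy one straddling \emph{block} at a time (the monomials from a block form the edge ideal of a complete bipartite graph, whose two minimal primes give the clean case split), whereas you describe it one generator $x_jy_{j+1}$ at a time; carried out carefully this amounts to the same argument.
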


\begin{proof}
We only prove the first part of the proposition. The second part follows by symmetry. 
%\begin{align*}
%P_1&=\left(y_1,\ldots,y_{i-1}\right)+\left(x_j\right)_{J_1 \subset \{1,\ldots, i-1\}}+P_{S_1}\\ 
%P_2&=\left(y_1,\ldots,y_{k-1}\right)+\left(x_j\right)_{J_2 \subset \{1,\ldots, k-1\}}+P_{S_2}
%\end{align*}

%\[P_1=\begin{pmatrix}
%           &  &  & x_{i-1} \\
%    y_1,       & y_2, & \cdots & ,y_{i-1} \\
%    \end{pmatrix}
%+P_{S_1} \]
%\[P_2=\begin{pmatrix}
%           &  &  & x_{k-1} \\
%    y_1,       & y_2, & \cdots & ,y_{k-1} \\
%    \end{pmatrix}
%+P_{S_{2}} \]
    
By hypothesis $P_{S_1}= \overline{P_{S_1}}$ in $\mathbb{K}[x_i,x_{i+1},\ldots,x_{n-1},x_n]$ and $P_{S_2}=\overline{P_{S_2}}$ in $\mathbb{K}[x_k,x_{k+1},\ldots,x_{n-1},x_n]$. This means that we can write them as
\begin{align*}
P_{S_1}=\overline{P_{S_1}}=& \Bigl( \bigcup \limits_{s \in S_1}\{x_s,y_s\}\Bigr) +I_2(X_{[r_1,t_1])})+\ldots+I_2(X_{[r_{c_1},t_{c_1}]})\\
 P_{S_2}=\overline{P_{S_2}}=& \Bigl( \bigcup \limits_{s \in S_2}\{x_s,y_s\}\Bigr) +I_2(X_{[\tilde{r}_1,\tilde{t}_1])})+\ldots+I_2(X_{[\tilde{r}_{c_2},\tilde{t}_{c_2}]})
\end{align*}
 with $i\leq r_1 < t_1< r_{2} < t_{2} <\ldots< t_{c_1}\leq n$ and $ k\leq \tilde{r}_1 < \tilde{t}_1 <\tilde{r}_{2} < \tilde{t}_{2}<\ldots <\tilde{t}_{c_2}\leq n$. Without loss of generality we can assume that $i \leq k$. We want to study the minimal primes of the ideal $P_1+P_2$. First of all we notice that
 %$r_1=i$, and $\tilde{r}_1=k$ (??).
  \begin{align*}
P_1+P_2 =&\left(y_l\right)_{l\leq k-1}+ \left(x_j\right)_{j \in J_1 \cup J_2}&& +\left( \bigcup \limits_{s \in S_1}\{x_s,y_s\}\right)+I_2(X_{[r_1,t_1])})+\ldots+I_2(X_{[r_{c_1},t_{c_1}]})\\
& &&+\left( \bigcup \limits_{s \in S_2}\{x_s,y_s\}\right) +I_2(X_{[\tilde{r}_1,\tilde{t}_1])})+\ldots+I_2(X_{[\tilde{r}_{c_2},\tilde{t}_{c_2}]})
\end{align*}
% \begin{align*}
%P_1+P_2 = & \begin{pmatrix}
%           &  &  & x_{i-1}& & & x_{k-1} \\
%    y_1,       & y_2, & \cdots & ,y_{i-1},& y_i,&\ldots & ,y_{k-1}
%    \end{pmatrix} + P_{S_1}+P_{S_2}\\
%     =& \begin{pmatrix}
%           &  &  & x_{i-1}& & & x_{k-1} \\
%    y_1,       & y_2, & \cdots & ,y_{i-1},& y_i,&\ldots & ,y_{k-1}
%    \end{pmatrix} +  \left( \bigcup \limits_{s \in S_1}\{x_s,y_s\}\right) +I_2(X_{[r_1,t_1])})+\ldots\\
%    & +I_2(X_{[r_{c_1},t_{c_1}]})+P_{S_2}
%\end{align*}
and, possibly dropping some summands, we can assume that $ t_1 \geq k$.\par 
But then, keeping in mind that $y_i, \ldots,y_{k-1} \in P_1+P_2$, we can write this sum as

\begin{align*}
P_1+P_2 &= \left(y_l\right)_{l\leq k-1}+ \left(x_j\right)_{j \in J_1 \cup J_2} +  \left( x_s\right)_{ \substack{{s \in S_1}\\{s \leq k-1}}}+\\
    &+\Bigl( \bigcup _{ \substack{{s \in S_1}\\{s \geq k}}} \{x_s,y_s\}\Bigr)+I_2(X_{[r'_1,t_1])})+I_2(X_{[r_{2},t_{2}]})+\ldots +I_2(X_{[r_{c_1},t_{c_1}]})+\\
    &+\left(x_u y_v\right)_{\substack {u \in  U_1:=\{r_1,\ldots,k-1\}\\ v \in V_1:=\{k,\ldots,t_1\}}}+P_{S_2}
\end{align*}
with $r'_1 \geq k$.\par

If $P \in \Min (P_1+P_2)$, then $P \supseteq Q_1$ for some $Q_1 \in \Min \left(x_u y_v\right)_{\substack {u \in  U_1 \\ v \in V_1}} $. Therefore $$P \in \Min \left(Q_1+\left(P_1+P_2\right)\right).$$ On the other hand, there are only two possibilities for $Q_1$
\begin{itemize}[itemsep=1pt,topsep=4pt] 
 \item[•]$Q_1=\left(x_u\right)_{u \in U_1}$
 \item[•]$Q_1=\left(y_v \right)_{v \in V_1}.$
\end{itemize}

\begin{itemize}[itemsep=1pt,topsep=4pt,leftmargin=0.1in] 
\item[]\textbf{1st case:} If $Q_1 =\left(x_u\right)_{u \in U_1} $, we have
 \begin{align*}
 Q_1 +\left(P_1+P_2\right)=\left(y_l\right)_{l\leq k-1}+ \left(x_j\right)_{ j \in U} +\left( P'_{S_1}+P_{S_2}\right) 
\end{align*}
where 
\begin{align*}
U&:= \left(J_1 \cup J_2 \cup S_1\cup U_1\right) \cap \{1, \ldots,k-1\}\\
P'_{S_1}&:= \Bigl( \bigcup _{ \substack{{s \in S_1}\\{s \geq k}}} \{x_s,y_s\}\Bigr) +I_2(X_{[r'_1,t_1])})+I_2(X_{[r_2,t_2])})+\ldots+I_2(X_{[r_{c_1},t_{c_1}]}) 
\end{align*}
   and $k \leq r'_1< t_1<r_2<\ldots<t_{c_1} \leq n$.   
 Thus, we have written $Q_1+\left(P_1+P_2\right)$ as the sum of two ideals of polynomials on disjoint sets of variables, namely 
 $\left(y_l\right)_{l\leq k-1}+ \left(x_j\right)_{ j \in U}$ and 
 $P'_{S_1}+P_{S_2}$. Since $P \in \Min \left(Q_1+\left(P_1+P_2\right)\right)$, we get that 
 $$P= \left(y_l\right)_{l\leq k-1}+ \left(x_u\right)_{U \subset \{1,\ldots, k-1\}}+L $$
 where $L  \in \Min (P'_{S_1}+P_{S_2})$. By Lemma \ref{lemmaP+Q}, we know that $L =\overline{L} \subseteq \mathbb{K}[x_k,\ldots,x_n]$ is a minimal prime of the binomial edge ideal of a weakly closed graph and we are done.\\

 \item[] \textbf{2nd case:} If  $Q_1 =\left(y_v \right)_{v \in V_1}$, then $I_2(X_{[r'_{1},t_{1}]})\in \left( y_l\right)_{l \leq t_1}$ and  we get
 \begin{align*}
 Q_1 +\left(P_1+P_2\right)&=\left( y_l\right)_{l \leq t_1}+\left(x_j\right)_{j \in J_1 \cup J_2} &\null&+\Bigl( \bigcup \limits_{s \in S_1}\{x_s,y_s\}\Bigr) +I_2(X_{[r_{2},t_{2}]})+\ldots +I_2(X_{[r_{c_1},t_{c_1}]})\\
 & &\null &+\Bigl( \bigcup \limits_{s \in S_2}\{x_s,y_s\}\Bigr) +I_2(X_{[\tilde{r}_1,\tilde{t}_1])})+\ldots+I_2(X_{[\tilde{r}_{c_2},\tilde{t}_{c_2}]}).
 \end{align*}
 
 where $r_2>t_1$ and again, possibly dropping some summands, we can assume that $ \tilde{t}_1 \geq t_1+1$.\par 
 As before, keeping in mind that $y_1, \ldots,y_{t_1} \in Q_1 +\left(P_1+P_2\right)$, we can write 
 \begin{align*}
Q_1+\left(P_1+P_2\right)&= \left(y_l\right)_{l\leq t_1}+ \left(x_j\right)_{j \in J_1 \cup J_2} +  \left( x_s\right)_{ \substack{{s \in S_1 \cup S_2}\\{s \leq t_1}}}+\\
    &+\Bigl( \bigcup _{ \substack{{s \in S_1}\\{s \geq t_1+1}}} \{x_s,y_s\}\Bigr)+I_2(X_{[r_{2},t_{2}]})+\ldots +I_2(X_{[r_{c_1},t_{c_1}]})+\\
    & +\Bigl( \bigcup _{ \substack{{s \in S_2}\\{s \geq t_1+1}}} \{x_s,y_s\}\Bigr)+I_2(X_{[{\tilde{r}_1}^{\prime},\tilde{t}_1])})+\ldots+I_2(X_{[\tilde{r}_{c_2},\tilde{t}_{c_2}]})\\
    &+\left(x_u y_v\right)_{\substack {u \in  \widetilde{U}_1:=\{\tilde{r}_1,\ldots,t_1\}\\ v \in \widetilde{V}_1:=\{t_1+1,\ldots,\tilde{t}_1\}}}  
\end{align*}
with ${\tilde{r}_1}^{\prime} \geq t_1+1$. Since $P \in \Min \left(Q_1+\left(P_1+P_2\right)\right)$ then $P$ must contain a minimal prime $Q_2$ of the ideal $\left(x_u y_v\right)_{\substack {u \in \widetilde{U}_1\\ v \in \widetilde{V}_1}}$. Hence $$P \in \Min \left(Q_2+ Q_1+\left(P_1+P_2\right) \right).$$ Again, there are only two possisbilities for $Q_2$, namely
  \begin{itemize}[itemsep=1pt,topsep=4pt] 
  \item[•]$Q_2=\left( x_u \right)_{u \in \widetilde{U}_1}$
  \item[•]$Q_2=\left( y_v \right)_{v \in \widetilde{V}_1}.$
\end{itemize}
and we can repeat the same argument as before.\par 
%\textcolor{red}{( faccio anche lo step successivo o posso fermarmi qui?)}\par
Thus, if $Q_2=\left( x_u \right)_{u \in \widetilde{U}_1}$ we have that
\begin{align*}
 Q_1 +\left(P_1+P_2\right)=\left(y_l\right)_{l\leq t_1}+ \left(x_j\right)_{ j \in U} +\left( P''_{S_1}+P'_{S_2}\right) 
\end{align*}
where 
\begin{align*}
U&:= \left(J_1 \cup J_2 \cup S_1 \cup S_2 \cup \widetilde{U}_1 \right) \cap \{1, \ldots,t_1\}\\
P''_{S_1}&:=\Bigl( \bigcup _{ \substack{{s \in S_1}\\{s \geq t_1+1}}} \{x_s,y_s\}\Bigr)+I_2(X_{[r_{2},t_{2}]})+\ldots +I_2(X_{[r_{c_1},t_{c_1}]})\\
P'_{S_2}&:= \Bigl( \bigcup _{ \substack{{s \in S_2}\\{s \geq t_1+1}}} \{x_s,y_s\}\Bigr)+I_2(X_{[{\tilde{r}_1}^{\prime},\tilde{t}_1])})+\ldots+I_2(X_{[\tilde{r}_{c_2},\tilde{t}_{c_2}]}).
\end{align*}
Since $r_2,{\tilde{r}_1}^{\prime}>t_1$, we have written $ Q_2+ Q_1+\left(P_1+P_2\right)$ as a sum of two ideals of polynomials on disjoint sets of variables, namely 
 $\left(y_l\right)_{l\leq t_1}+ \left(x_j\right)_{ j \in U}$ and 
 $P''_{S_1}+P'_{S_2}$. Since $P \in \Min \left(Q_2+Q_1+\left(P_1+P_2\right)\right)$, we get that 
 $$P= \left(y_l\right)_{l\leq t_1}+ \left(x_u\right)_{U \subset \{1,\ldots, t_1\}}+L $$
 where $L  \in \Min (P''_{S_1}+P'_{S_2})$. By Lemma \ref{lemmaP+Q}, we know that $L =\overline{L} \subseteq \mathbb{K}[x_{t_1+1},\ldots,x_n]$ is a minimal prime of a weakly closed graph  and we are done.\\

If instead, $Q_2=\left( y_v \right)_{v \in \widetilde{V}_1}$, we can iterate the above procedure.  At each step we add new variables $Q_p$ to the generators of $P_1+P_2$ but we still have $P \in \Min (Q_p+\ldots+Q_1+P_1+P_2)$. Since we have a finite number of variables, the algorithm must always terminate after a finite number of steps.
\end{itemize}
\end{proof}
%\begin{oss}\label{ossP}
%Consider two ideals of the form
%\begin{align*}
%P_1&=\left(y_1,\ldots,y_{a-1}\right)+\left(x_j\right)_{J_1 \subset \{1,\ldots, a-1\}}+P_{S_1}, \qquad \ P_{S_1}=\overline{P_{S_1}}\\ 
%P_2&=\left(x_{b+1},\ldots,x_n\right)+\left(y_v\right)_{J_2 \subset \{b+1,\ldots, n\}}+P_{S_2}, \qquad P_{S_2}=\overline{P_{S_2}}
%\end{align*}
%where $P_{S_1}$ and $P_{S_2}$ are minimal primes of binomial edge ideals of weakly closed graphs on vertices $\{a,\ldots,n\}$ and $\{1,\ldots,b\}$, respectively. Let $P \in \Min \left(P_1+P_2\right)$. Then
%$$P= \left(y_1,\ldots,y_{c-1}\right)+\left(x_u\right)_{U \subset \{1,\ldots, c-1\}}+L+\left(x_{d+1},\ldots,x_n\right)+\left(y_v\right)_{V \subset \{d+1,\ldots, n\}}$$
%with $L=\overline{L}$ in $ \mathbb{K}[x_{c},y_{c},\ldots,x_{d},y_{d}]$.
%\end{oss}
From Proposition \ref{lemma2} and Lemma \ref{lemmaP+Q}, we obtain a generalization of Lemma \ref{step2}.

\begin{lem}\label{corP}
Let $P_1$ and $P_2$ be two prime ideals of the form
\begin{align*}
P_1&=\left(y_1,\ldots,y_{a-1}\right)+\left(x_u\right)_{U_1 \subset \{1,\ldots, a-1\}}+P_{S_1}+\left( x_{b+1},\ldots,x_n \right)+\left( y_v \right)_{V_1 \subset \{b+1,\ldots, n\}}\\ 
P_2&=\left(y_1,\ldots,y_{c-1}\right)+\left(x_u\right)_{U_2 \subset \{1,\ldots, c-1\}}+P_{S_2}+\left(x_{d+1},\ldots,x_n\right)+\left(y_v\right)_{V_2 \subset \{d+1,\ldots, n\}}
\end{align*}
where $P_{S_1}=\overline{P_{S_1}}\subset \mathbb{K}[x_a,x_{a+1}, \ldots, x_{b}]$ and $ P_{S_2}=\overline{P_{S_2}}\subset \mathbb{K}[x_c,x_{c+1}, \ldots, x_{c}]$ are minimal primes of binomial edge ideals of weakly closed graphs and each of the summands may possibly be the zero ideal. Let $P \in \Min \left(P_1+P_2\right)$. Then
$$P= \left(y_1,\ldots,y_{i-1}\right)+\left(x_u\right)_{U \subset \{1,\ldots, i-1\}}+L+\left(x_{j+1},\ldots,x_n\right)+\left(y_v\right)_{V \subset \{j+1,\ldots, n\}}$$
with $L=\overline{L}\subseteq \mathbb{K}[x_{i},y_{i},\ldots,x_{j},y_{j}]$ and each of the three summands may possibly be the zero ideal.
\end{lem}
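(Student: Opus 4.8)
The plan is to reduce the statement to the three situations already settled: Proposition~\ref{lemma2} at the ``left'' end, its symmetric counterpart at the ``right'' end, and Lemma~\ref{lemmaP+Q} in the ``middle'', running the same peeling argument that drives the proof of Proposition~\ref{lemma2}. After relabelling we may assume $a\le c$; set $e:=\min(b,d)$. Collecting the variable generators,
\[
P_1+P_2=\bigl((y_1,\dots,y_{c-1})+(x_u)_{U_1\cup U_2}\bigr)+\bigl(P_{S_1}+P_{S_2}\bigr)+\bigl((x_{e+1},\dots,x_n)+(y_v)_{V_1\cup V_2}\bigr),
\]
and we must show that an arbitrary $P\in\Min(P_1+P_2)$ has the claimed three-block shape.

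First I would clear the left end. Writing each $P_{S_i}$ in its reduced form $\bigl(\bigcup_{s\in S_i}\{x_s,y_s\}\bigr)+I_2(X_{[r_1,t_1]})+\cdots$ and using that $y_1,\dots,y_{c-1}\in P_1+P_2$, exactly as in the proof of Proposition~\ref{lemma2} a product ideal $(x_uy_v)_{u\in U,\,v\in V}$ (coming from the first $2$-minor block straddling the index $c$) sits inside $P_1+P_2$; since $P$ is a prime minimal over $P_1+P_2$, it contains one of the two monomial primes $(x_u)_{u\in U}$ or $(y_v)_{v\in V}$. Adjoining that prime and iterating the argument of Proposition~\ref{lemma2} verbatim, and since there are only finitely many variables, after finitely many steps $P$ becomes a minimal prime of
\[
\bigl((y_1,\dots,y_{i-1})+(x_u)_U\bigr)+\bigl(P'_{S_1}+P'_{S_2}\bigr)+\bigl((x_{e+1},\dots,x_n)+(y_v)_{V_1\cup V_2}\bigr),
\]
where $U\subseteq\{1,\dots,i-1\}$ and $P'_{S_1},P'_{S_2}$ are reduced minimal primes of binomial edge ideals of weakly closed graphs supported on columns $\ge i$. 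Next I would repeat this symmetrically at the right end, invoking the symmetric statement of Proposition~\ref{lemma2}, to peel off a block $(x_{j+1},\dots,x_n)+(y_v)_V$ and replace $P'_{S_1}+P'_{S_2}$ by $P''_{S_1}+P''_{S_2}$, now supported on the middle columns $\{i,\dots,j\}$ (if $i>j$ the middle block is the zero ideal and $P$ is already of the asserted form).

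At this point the three groups of variables --- indices $<i$, indices in $\{i,\dots,j\}$, and indices $>j$ --- are pairwise disjoint, so $P=\bigl((y_1,\dots,y_{i-1})+(x_u)_U\bigr)+L+\bigl((x_{j+1},\dots,x_n)+(y_v)_V\bigr)$ with $L\in\Min(P''_{S_1}+P''_{S_2})$, and Lemma~\ref{lemmaP+Q} gives $L=\overline L$, a minimal prime of the binomial edge ideal of a weakly closed graph on $\{i,\dots,j\}$. That is precisely the stated shape, so the proof is complete.

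The part I expect to cause the most trouble is the bookkeeping in the two peeling phases: one has to check that the right-hand block can genuinely be carried along passively while the left end is being resolved (it only ever contributes new variables, never new $2$-minors that interact with the left end, and vice versa), that after peeling the residual middle ideals $P'_{S_i}$ are again \emph{reduced} minimal primes of weakly closed graphs on a strictly smaller index range, and that the various degenerate cases --- a genuine gap between $b$ and $c$ (in which one or both of $P_{S_1},P_{S_2}$ collapse entirely into variable ideals), the case $i>j$, or some summands of $P_1,P_2$ being zero --- all still land in the claimed form. Once this is verified, the result is a direct consequence of Proposition~\ref{lemma2} and Lemma~\ref{lemmaP+Q}.
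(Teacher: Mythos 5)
Your proposal is correct and follows essentially the route the paper intends: the paper gives no separate proof of Lemma~\ref{corP}, merely asserting that it follows from Proposition~\ref{lemma2} and Lemma~\ref{lemmaP+Q}, and your two-sided peeling argument (running the monomial-prime dichotomy of Proposition~\ref{lemma2} at the left end, its symmetric counterpart at the right end, and finishing with Lemma~\ref{lemmaP+Q} on the disjoint middle block) is exactly that derivation spelled out. The bookkeeping concerns you flag are real but resolve as you indicate, with the fully degenerate cascades landing in the ``$L=0$'' case allowed by the statement.
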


Finally, putting together Lemma \ref{step2} and Lemma \ref{corP}, we manage to identify all the minimal primes of the ideals in $\mathcal{C}_f$ and prove Proposition \ref{PrIdCf}.

%\begin{prop} \label{PrIdCf}
%Let $I\in \mathcal{C}_f$ and let $P \in \Min (I)$. Then
%$$P= \left(y_1,\ldots,y_{k-1}\right)+\left(x_u\right)_{U \subset \{1,\ldots, k-1\}}+L +\left(x_{l+1},\ldots,x_n\right)+\left(y_v\right)_{V \subset \{l+1,\ldots, n\}}$$
%where $L=\overline{L}\subseteq \mathbb{K}[x_{k},y_{k},\ldots,x_{l},y_{l}]$ (each summand possibly being zero).
%\end{prop}

\begin{proof}
We recall that $\mathcal{C}_f$ is constructed from $(f)$ by taking its minimal primes, their sums, their intersections, and iterating. Since by Remark \ref{sumint}, finite sums and intersections commute, we only need to prove the result for sums of minimal prime ideals.\par  We know that the minimal primes of $(f)$ are
$$(x_n), (f_{1,2}),(f_{2,3}),\ldots,(f_{n-1,n}),(y_1).$$
These primes have the desired form.\par 
If we take the sum of some of these prime ideals, by Lemma \ref{step2}  the minimal primes of this sum are of the form
$$  P=\left( y_1,y_2, \ldots, y_{i-1},x_{i-1} \right)
+P_S + \left( y_{l+1},x_{l+1},x_{l+2}, \ldots,x_n \right)$$
   where $P_S= \overline{P_S}\subset \mathbb{K}[x_i,x_{i+1},\ldots,x_{l-1},x_l]$ is a minimal prime of the binomial edge ideal of a weakly closed graph.
Note that these minimal primes satisfy the hypotheses of Lemma \ref{corP}. Thus applying Lemma \ref{corP} and iterating this procedure we get the thesis.
\end{proof}

%Now we are ready to complete the proof of Theorem \ref{cf-wc}.
%
%\begin{proof}
%We have already proved in Proposition \ref{wc-cf} that if $G$ is weakly closed, $J_G$ is a Knutson ideal associated with $f$. It remains to prove that if $J_G$ is a Knutson binomial edge ideal, then $G$ is a weakly closed graph. \par 
%Let $J_G$ be a binomial edge ideal in $\mathcal{C}_f$. By \cite[Theorem 3.2]{HHHKR}, we know that a primary decomposition of $J_G$ is given by 
%$$J_G= \bigcap_{S \subseteq [n]} P_S$$
%where $P_S:= \Bigl( \bigcup_{i \in S} \{x_i,y_i\}\Bigr)+ J_{\tilde{G}_1}+\ldots+ J_{\widetilde{G}_{c(S)}}.$ On the other hand, by Proposition \ref{PrIdCf}, we know that $P_S =\overline{P}_S$ for every minimal prime of $J_G$. Hence, the thesis follows from Proposition \ref{psps}.
%\end{proof}

From Theorem \ref{cf-wc}, we know that Knutson binomial edge ideals are exactly those binomial edge ideals associated with weakly closed graphs. In view of this result one might hope to find a generalization of this theorem to higher dimensions, that is a characterization of all Knutson determinantal facet ideals.\par 
Determinantal facet ideals of simplicial complexes are the natural extension of binomial edge ideals of graphs. In \cite{BSV}, the authors introduce \lq \lq unit-interval”, \lq \lq under-closed”, and \lq \lq weakly-closed” simplicial complexes as natural $d$-dimensional generalizations of unit-interval, interval, and weakly-closed graphs and they investigate their  connections with Hamiltonian paths and determinantal facet ideals. \par 
%Among other things, we get a new algebraic result which corrects a recent theorem stated in \cite{EHHM}, extends Theorem \ref{thmHHH} by Herzog et al, and provide a partial answer to a question by Almousa–Vandebogert \cite{AV}.\\
Certainly, by \cite[Theorem 77]{BSV}, determinantal facet ideals of semi-closed simplicial complexes are Knutson ideals  and we know from \cite[Example 73]{BSV} that this result does not extend to weakly closed simplicial complexes. However, there could be other simplicial complexes whose determinantal facet ideals are Knutson ideals. \par 
This is a challenging question, somehow related to the problem of finding a primary decomposition of determinantal facet ideals. As we have seen, the proof of Theorem \ref{cf-wc} makes heavy use of primary decompositions of binomial edge ideals, which are well-known. Instead, the primary decomposition of a determinantal facet ideal is still unknown in general, even if there have been some steps in this direction (\cite{HS}, \cite{MR}). This makes it hard to generalize the previous proofs to higher dimensions.

 \end{document}